\newcommand{\vect}[1]{\ensuremath\mathbf{#1}}
\newcommand{\subf}[1][]{\ifthenelse{\equal{#1}{}}{\ensuremath\leq}{\ensuremath\leq_{#1}}}
\newcommand{\subc}[1][]{\ifthenelse{\equal{#1}{}}{\ensuremath\preccurlyeq}{\ensuremath\preccurlyeq_{#1}}}
\newcommand{\fequiv}[1][]{\ifthenelse{\equal{#1}{}}{\ensuremath\equiv}{\ensuremath\equiv_{#1}}}
\newcommand{\eqclass}[2][]{\ifthenelse{\equal{#1}{}}{\ensuremath[{#2}]}{\ensuremath[{#2}]_{#1}}}
\theoremstyle{plain}
\newtheorem{theorem}{Theorem}[section]
\newtheorem{proposition}[theorem]{Proposition}
\newtheorem{lemma}[theorem]{Lemma}
\newtheorem{corollary}[theorem]{Corollary}
\newtheorem{clm}[theorem]{Claim}
\theoremstyle{definition}
\numberwithin{equation}{section}
\def\cl#1{{\mathcal #1}}
\def\al#1{{\mathbf #1}}
\def\Iso{{\text{\rm Iso}}}
\def\Im{{\text{\rm Im}}}
\def\GL{{\text{\rm GL}}}
\def\AGL{{\text{\rm AGL}}}
\def\FF{{\mathfrak F}}
\def\EE{{\mathfrak E}}
\def\id{{\rm id}}
\def\FFF{{\mathbb F}}
\def\subfunction{minor}
\begin{document}
\title{Equivalence of operations with respect to discriminator clones}
\thanks{This material is based upon work
supported by
the Hungarian National Foundation
for Scientific Research (OTKA)
grants no.\  T 048809 and K60148.
}
\author{Erkko Lehtonen}
\address[Erkko Lehtonen]{Institute of Mathematics \\
Tampere University of Technology \\
P.O. Box 553 \\ FI-33101 Tampere \\
Finland}
\email{erkko.lehtonen@tut.fi}
\author{\'Agnes Szendrei}
\address[\'Agnes Szendrei]{Department of Mathematics \\
University of Colorado \\
Boulder \\
CO 80309-0395 \\
USA\\ and
Bolyai Institute \\
Aradi v\'ertan\'uk tere 1 \\
H--6720 Szeged \\
Hungary }
\email{szendrei@euclid.colorado.edu}
\date{\today}
\begin{abstract}
For each clone $\cl{C}$ on a set $A$ 
there is an associated equivalence relation,
called $\cl{C}$-equivalence,
on the set of all operations on $A$, 
which relates two operations iff each one
is a substitution instance of the other 
using operations from $\cl{C}$.
In this paper we prove that 
if $\cl{C}$ is a discriminator clone on a
finite set, then there are only finitely many
$\cl{C}$-equivalence classes. Moreover,
we show that
the smallest discriminator clone is minimal with 
respect to this finiteness property.
For discriminator clones of Boolean functions
we explicitly describe the associated equivalence
relations.
\end{abstract}

\maketitle

\section{Introduction}

This paper is a study of how functions on a fixed
set can be classified using
their substitution
instances with inner functions taken from
a given set of functions.
In the theory of Boolean functions
several variants of this idea have been employed.
Harrison~\cite{Harrison} was interested in the number
of equivalence classes when $n$-ary Boolean functions
are identified if they are substitution
instances of each other with respect to 
the general linear group $\GL(n,\FFF_2)$
or the affine general linear group $\AGL(n,\FFF_2)$ ($\FFF_2$ is the two-element field).
Wang and Williams~\cite{WW} introduced classification
by Boolean minors to prove that the problem of 
determining  
the threshold order of a Boolean function 
is NP-complete.
They defined a Boolean function $g$ to be 
a {\it minor} of another Boolean function $f$ iff
$g$ can be obtained from $f$ by substituting for each variable of $f$ a variable, a negated variable,
or one of the constants $0$ or $1$. 
Wang~\cite{Wang} characterized various classes 
of Boolean functions by forbidden minors.
A more restrictive variant of Boolean minors, namely
when negated variables are not allowed,
was used in \cite{FH} and \cite{Zverovich} to characterize other classes of Boolean functions by forbidden minors.

In semigroup theory, Green's relation $R$, 
when applied to transformation semigroups $\cl{S}$, 
is another occurrence of the idea of classifying functions
by their substitution instances; namely, 
two transformations
$f,g\in\cl{S}$ are $R$-related iff
$f\bigl(h_1(x)\bigr)=g(x)$ and 
$g\bigl(h_2(x)\bigr)=f(x)$ for some
$h_1,h_2\in\cl{S}\cup\{\id\}$. 
Henno~\cite{Henno} generalized Green's relations to
Menger algebras (essentially, abstract clones),
and described Green's relations on the clone
$\cl{O}_A$ of all operations on $A$ for each set $A$.

The notions of $\cl{C}$-minor and $\cl{C}$-equivalence
where $\cl{C}$ is an arbitrary clone provide a
common framework for these results.
If $\cl{C}$ is a fixed clone on a set $A$, 
and $f,g$ are operations on $A$,
then $g$ is a $\cl{C}$-minor of $f$ 
if $g$ can be obtained
from $f$ by substituting operations from $\cl{C}$
for the variables of $f$,
and $g$ is $\cl{C}$-equivalent to $f$ if
$f$ and $g$ are both $\cl{C}$-minors of each other.
Thus, for example, the $R$-relation on $\cl{O}_A$
described in \cite{Henno} is nothing else than
$\cl{O}_A$-equivalence, and the concepts of Boolean minor
mentioned in the first paragraph 
are the special cases of the notion of
$\cl{C}$-minor where $\cl{C}$ is the essentially unary clone of Boolean functions generated by negation and the two constants, or by the two constants only.
For the least clone of Boolean functions, 
the essentially unary clone $\cl{P}$ of all projections, the $\cl{P}$-minor relation is investigated in 
\cite{CP}, and the classes of Boolean functions that  
are closed under taking $\cl{P}$-minors are characterized
in \cite{EFHH}.
The latter result is extended in \cite{Pippenger} to
classes of functions on finite sets that are closed
under taking $\cl{C}$-minors for arbitrary 
essentially unary clones $\cl{C}$.
The general notions of $\cl{C}$-minor and $\cl{C}$-equivalence, as introduced at the
beginning of this paragraph, first appeared in print in
\cite{ULM}, where the first author
studied the $\cl{C}$-minor quasiorder 
for clones $\cl{C}$ of monotone and linear operations.

The question this paper will focus on is the following.

\medskip

\noindent
{\bf Question.}
For which clones $\cl C$ on a finite set are there only
finitely many $\cl{C}$-equivalence classes of operations?

\medskip

The clones that have this property form a filter 
$\FF_A$ in the
lattice of clones on $A$
(cf.\ Proposition~\ref{basic_props2}).
Henno's 
result~\cite{Henno} (cf.\ Corollary~\ref{cor-O}) 
implies that $\cl{O}_A\in\FF_A$ if and only if
$A$ is finite.
Thus the filter $\FF_A$ is nonempty 
if and only if $A$ is finite.
The filter $\FF_A$ is proper if $|A|>1$, since 
the clone $\cl{P}_A$ of projections fails to belong to 
$\FF_A$.
The latter statement follows from the fact that
$\cl{P}_A$-equivalent operations have the same essential arity (i.e., depend on the same number of variables),
and on a set with more than one element there exist
operations of arbitrarily large essential arity.

In this paper we prove that every 
discriminator clone on a finite set $A$ 
belongs to $\FF_A$.
Furthermore, we show that if $|A|=2$,
then the members of $\FF_A$ are exactly the discriminator clones; thus in this case $\FF_A$ has a least member,
namely the smallest discriminator clone.
If $|A|>2$, then the analogous statements are no longer
true, because by a result of the first author 
in \cite{leht}, S{\l}upecki's clone belongs to 
$\FF_A$. S{\l}upecki's clone consists of all operations
that are either essentially unary or non-surjective,
therefore it is not a discriminator clone.
Thus for finite sets with three or more elements
the filter $\FF_A$ remains largely unknown.
However, we show that even in this case
the smallest discriminator clone is 
a minimal member of $\FF_A$.

In the last section of the paper 
we explicitly describe
the $\cl{C}$-equivalence and $\cl{C}$-minor
relations for 
discriminator clones of Boolean functions.

\section{Preliminaries}

Let $A$ be a fixed nonempty set. 
If $n$ is a positive integer, 
then by an $n$-ary operation on $A$ we
mean a function $A^n\to A$, 
and we will refer to $n$ as the
\emph{arity} of the operation. 
The set of all $n$-ary
operations on $A$ will be denoted by $\cl O_A^{(n)}$,
and we will write $\cl O_A$ for the set of all finitary operations on $A$.
For $1 \leq i \leq n$ the $i$-th $n$-ary \emph{projection} is the operation 
$p_i^{(n)}\colon A^n\to A,\ (a_1, \ldots, a_n) \mapsto a_i$. 

Every function $h\colon A^n\to A^m$ is uniquely determined
by the $m$-tuple of functions
$\vect{h}=(h_1,\ldots,h_m)$ where
$h_i=p_i^{(m)}\circ h\colon A^n\to A$ 
($i=1,\ldots,m$). 
In particular, $\vect{p}^{(n)}=(p_1^{(n)},\ldots,p_n^{(n)})$
corresponds to the identity function $A^n\to A^n$.
From now on we will identify each function
$h\colon A^n\to A^m$ with the corresponding $m$-tuple
$\vect{h}=(h_1,\ldots,h_m)\in (\cl O_A^{(n)})^m$ of $n$-ary
operations. 
Using this convention the
\emph{composition} of functions 
$\vect{h}=(h_1,\ldots,h_m)\colon A^n\to A^m$
and $\vect{g}=(g_1,\ldots,g_k)\colon A^m\to A^k$ 
can be written as
\[
\vect{g}\circ \vect{h}=
(g_1\circ \vect{h},\ldots,g_k\circ \vect{h})=
\bigl(g_1(h_1,\ldots,h_m),\ldots,g_k(h_1,\ldots,h_m)\bigr)
\]
where
\[
g_i(h_1, \ldots, h_m)(\vect{a}) = 
g_i\bigl(h_1(\vect{a}), \ldots, h_m(\vect{a})\bigr)
\qquad
\text{for all $\vect{a} \in A^n$ and for all $i$}.
\]

A \emph{clone} on $A$ is a subset $\cl C$ of $\cl O_A$
that contains the projections and is closed under composition; more precisely, this means that 
for all $m$, $n$ and $i$ ($1\le i\le n$),\ we have
$p_i^{(n)}\in\cl C$ and
whenever $g\in\cl C^{(m)}$ and $\vect{h}\in(\cl C^{(n)})^m$
then $g\circ\vect{h}\in\cl C^{(n)}$.
The clones on $A$ form a complete lattice under
inclusion. Therefore for each set $F\subseteq\cl O_A$
of operations there exists a smallest clone that contains
$F$, which will be denoted by $\langle F\rangle$
and will be referred to as the 
\emph{clone generated by} $F$.

Let $\cl C$ be a fixed clone on $A$.
For arbitrary operations $f\in\cl O_A^{(n)}$ and 
$g\in\cl O_A^{(m)}$ we say that 
\begin{itemize}
\item
$f$ is a
\emph{$\cl{C}$-\subfunction{}} of $g$, in symbols $f \subf[\cl{C}] g$, if $f = g\circ\vect{h}$ for some $\vect{h}\in (\cl C^{(n)})^m$; 
\item
$f$ and $g$ are \emph{$\cl{C}$-equivalent,} in symbols $f \fequiv[\cl{C}] g$, if $f \subf[\cl{C}] g$ and
$g \subf[\cl{C}] f$.
\end{itemize}
Some of the basic properties of the relations
$\subf[\cl{C}]$ and $\fequiv[\cl{C}]$
are summarized below.

\begin{proposition}
\label{basic_props}
Let $\cl C$ and $\cl C'$ be clones on $A$.
\begin{enumerate}
\item[{\rm(i)}]
$\subf[\cl{C}]$ is a quasiorder on $\cl{O}_A$.
\item[{\rm(ii)}]
$\fequiv[\cl{C}]$ is an equivalence relation 
on $\cl{O}_A$.
\item[{\rm(iii)}]
$\subf[\cl{C}]\,\subseteq\,\subf[\cl{C}']$ if and only if
$\cl{C}\subseteq\cl{C}'$.
\item[{\rm(iv)}]
$\fequiv[\cl{C}]\,\subseteq\,\fequiv[\cl{C}']$ if
$\cl{C}\subseteq\cl{C}'$.
\end{enumerate}
\end{proposition}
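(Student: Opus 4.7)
The plan is to treat each part in sequence, using only the two defining properties of a clone: it contains all projections and is closed under composition.

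For (i), reflexivity of $\subf[\cl{C}]$ is witnessed by the projection tuple. Namely, for any $n$-ary $f$, the identity $f=f\circ \vect{p}^{(n)}$ and the fact that $\vect{p}^{(n)}\in(\cl{C}^{(n)})^n$ give $f\subf[\cl{C}] f$. For transitivity, suppose $f\subf[\cl{C}] g\subf[\cl{C}] h$, so $f=g\circ\vect{h}_1$ with $\vect{h}_1\in(\cl{C}^{(n)})^m$ and $g=h\circ\vect{h}_2$ with $\vect{h}_2\in(\cl{C}^{(m)})^k$. Associativity of composition (which should be checked once, or cited as standard for clones) yields $f=h\circ(\vect{h}_2\circ\vect{h}_1)$; each component of $\vect{h}_2\circ\vect{h}_1$ has the form $(\vect{h}_2)_i(h_{1,1},\ldots,h_{1,m})$, so closure of $\cl{C}$ under composition places it in $\cl{C}^{(n)}$.

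Part (ii) is then immediate: $\fequiv[\cl{C}]$ is symmetric by definition, and reflexivity and transitivity are inherited from the corresponding properties of $\subf[\cl{C}]$ proved in (i).

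For the nontrivial direction of (iii), the key observation is the characterization
\[
f\in\cl{C}^{(n)}\quad\Longleftrightarrow\quad f\subf[\cl{C}] p_1^{(1)},
\]
which holds because $f=p_1^{(1)}\circ(h_1)$ is equivalent to $f=h_1$. Assuming $\subf[\cl{C}]\,\subseteq\,\subf[\cl{C}']$, any $f\in\cl{C}^{(n)}$ satisfies $f\subf[\cl{C}] p_1^{(1)}$, hence $f\subf[\cl{C}'] p_1^{(1)}$, hence $f\in\cl{C}'$. The forward direction is trivial: if $\cl{C}\subseteq\cl{C}'$, then every witness tuple $\vect{h}\in(\cl{C}^{(n)})^m$ is automatically a witness tuple in $(\cl{C}'^{(n)})^m$. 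Part (iv) follows at once from (iii) because $\fequiv[\cl{C}]\,=\,\subf[\cl{C}]\cap(\subf[\cl{C}])^{-1}$.

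There is no real obstacle here; the only point that might require a moment of thought is pinpointing the correct "test witness" $p_1^{(1)}$ for the reverse implication in (iii), since that is the one step that is not purely formal manipulation of definitions.
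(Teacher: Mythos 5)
Your proposal is correct and follows essentially the same route as the paper: reflexivity via the projection tuple, transitivity via closure under composition, and the reverse implication in (iii) via the same test witness $p_1^{(1)}$, i.e.\ the identity $\{f:f\subf[\cl{C}]p_1^{(1)}\}=\cl{C}$. The only cosmetic difference is that you derive (iv) from (iii) rather than directly from the definitions, which amounts to the same argument.
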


\begin{proof}
$f\subf[\cl{C}]f$ for all $f\in\cl{O}_A^{(n)}$ 
and $n\ge1$,
since $f=f\circ\vect{p}^{(n)}$ with
$\vect{p}^{(n)}\in(\cl{C}^{(n)})^n$,
as $\cl{C}$ contains the projections.
If $f\subf[\cl{C}] f' \subf[\cl{C}] f''$
where $f,f',f''$ have arities $k,m,n$, respectively,
then by definition, $f=f'\circ\vect{h}$ and 
$f'=f''\circ\vect{h}'$ for some 
$\vect{h}\in(\cl{C}^{(k)})^m$ and 
$\vect{h}'\in(\cl{C}^{(m)})^n$.
Thus $f=(f''\circ\vect{h}')\circ\vect{h}
=f''\circ(\vect{h}'\circ\vect{h})$ with
$\vect{h}'\circ\vect{h}\in(\cl{C}^{(k)})^n$
as $\cl{C}$ is closed under composition.
Hence $f\subf[\cl{C}] f''$.
This proves that $\subf[\cl{C}]$ is reflexive and transitive, establishing (i).
The claim in (ii) is an immediate consequence of (i).

It follows directly from the definitions that
for arbitrary clones $\cl C\subseteq \cl{C}'$ on $A$
we have $\subf[\cl{C}]\subseteq \subf[\cl{C}']$
and $\fequiv[\cl{C}]\subseteq \fequiv[\cl{C}']$.
This proves (iv) and the sufficiency in (iii).
To prove the necessity in (iii) notice that
$\{f\in\cl{O}_A:f\subf[\cl{C}]p_1^{(1)}\}=\cl{C}$.
This equality and the analogous equality for $\cl{C}'$
show that $\subf[\cl{C}]\,\subseteq\,\subf[\cl{C}']$ 
implies
$\cl{C}\subseteq\cl{C}'$.
\end{proof}

By definition, the equivalence relation $\fequiv[\cl{C}]$ is the intersection of
$\subf[\cl{C}]$ with its converse.
Therefore the quasiorder
$\subf[\cl{C}]$ induces a partial order on the set
$\cl{O}_A/{\fequiv[\cl{C}]}$ of $\cl{C}$-equivalence
classes. This partial order will be denoted by
$\preceq_\cl{C}$.
 
\begin{corollary}
\label{cor-nu}
If $\cl C$ and $\cl C'$ are clones on $A$ such that
$\cl{C}\subseteq\cl{C}'$, then 
\[
\nu_{\cl{C}',\cl{C}}\colon
\cl{O}_A/{\fequiv[\cl{C}]}\to\cl{O}_A/{\fequiv[\cl{C}']},
\qquad
f/{\fequiv[\cl{C}]}\mapsto f/{\fequiv[\cl{C}']}
\]
is an order preserving mapping of the poset
$(\cl{O}_A/{\fequiv[\cl{C}]};\preceq_\cl{C})$
onto 
$(\cl{O}_A/{\fequiv[\cl{C}']};\preceq_{\cl{C}'})$.
\end{corollary}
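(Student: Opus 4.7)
The plan is to verify three things in sequence: well-definedness of $\nu_{\cl{C}',\cl{C}}$ on $\fequiv[\cl{C}]$-classes, surjectivity, and order preservation. Each follows almost immediately from Proposition~\ref{basic_props} applied to the inclusion $\cl{C}\subseteq\cl{C}'$, so the corollary is really a reformulation of parts (iii) and (iv) of that proposition at the quotient level.

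First I would check that $\nu_{\cl{C}',\cl{C}}$ is well defined. Suppose $f\fequiv[\cl{C}]g$. By Proposition~\ref{basic_props}(iv), $\fequiv[\cl{C}]\,\subseteq\,\fequiv[\cl{C}']$, hence $f\fequiv[\cl{C}']g$, so $f$ and $g$ determine the same $\cl{C}'$-equivalence class and the image does not depend on the choice of representative. Surjectivity is trivial: for any $f\in\cl{O}_A$, the class $f/{\fequiv[\cl{C}']}$ is the image of $f/{\fequiv[\cl{C}]}$.

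For order preservation, suppose $f/{\fequiv[\cl{C}]}\preceq_{\cl{C}}g/{\fequiv[\cl{C}]}$, which by the definition of $\preceq_{\cl{C}}$ just before the corollary means $f\subf[\cl{C}]g$. The easy direction of Proposition~\ref{basic_props}(iii) — namely, the inclusion $\subf[\cl{C}]\,\subseteq\,\subf[\cl{C}']$ that follows from $\cl{C}\subseteq\cl{C}'$ — gives $f\subf[\cl{C}']g$, i.e.\ $\nu_{\cl{C}',\cl{C}}(f/{\fequiv[\cl{C}]})\preceq_{\cl{C}'}\nu_{\cl{C}',\cl{C}}(g/{\fequiv[\cl{C}]})$, as required.

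There is no real obstacle here; the entire corollary is formal bookkeeping built on the monotonicity of the $\cl{C}$-minor and $\cl{C}$-equivalence relations in $\cl{C}$. The one thing to resist is trying to prove more than is stated: the map need not be injective, nor need it reflect the order, since in general $\fequiv[\cl{C}']$ is strictly coarser than $\fequiv[\cl{C}]$, so different $\cl{C}$-classes can collapse under $\nu_{\cl{C}',\cl{C}}$.
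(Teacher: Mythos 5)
Your proof is correct and follows exactly the paper's own argument: well-definedness from Proposition~\ref{basic_props}~(iv), order preservation from the easy direction of Proposition~\ref{basic_props}~(iii), and surjectivity directly from the definition. Nothing to add.
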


\begin{proof}
$\nu_{\cl{C}',\cl{C}}$ is well defined by 
Proposition~\ref{basic_props}~(iv), and order preserving by
Proposition~\ref{basic_props}~(iii).
The surjectivity of $\nu_{\cl{C}',\cl{C}}$ is clear from
its definition.
\end{proof}

By definition, $\nu_{\cl{C}',\cl{C}}$ 
($\cl{C}\subseteq\cl{C}'$)
maps each $\cl{C}$-equivalence class to the
$\cl{C}'$-equivalence class containing it.
Therefore
\begin{equation}
\label{comp-nu}
\nu_{\cl{C}'',\cl{C}} = 
\nu_{\cl{C}'',\cl{C}'}\circ\nu_{\cl{C}',\cl{C}}
\qquad
\text{if}
\qquad
\cl{C}\subseteq\cl{C}'\subseteq\cl{C}''.
\end{equation}

Now we will assume that $A$ is finite, and will
discuss some basic facts on
clones $\cl{C}$ for which 
$\fequiv[\cl{C}]$ has finite index in $\cl{O}_A$
(that is, the number of
$\cl C$-equivalence classes of operations on $A$
is finite). 
We will need the following notation.
If $\cl C$ is a clone on $A$ and $B$ is a nonempty
subset of $A$ such that every operation in $\cl C$
preserves $B$, then by restricting all operations
in $\cl C$ to $B$ we get a clone on $B$, which we will
denote by $\cl{C}|_B$.

\begin{proposition}
\label{basic_props2}
Let $\cl C$ be a clone on a finite set $A$.
\begin{enumerate}
\item[{\rm(i)}]
$\fequiv[\cl{C}]$ has finite index in $\cl{O}_A$
if and only if there exists an integer $d>0$ such that 
every operation on $A$ is $\cl C$-equivalent
to a $d$-ary operation on $A$.
\item[{\rm(ii)}]
If $\fequiv[\cl{C}]$ has finite index in $\cl{O}_A$,
then $\fequiv[\cl{C}']$ has finite index in $\cl{O}_A$
for every clone $\cl{C}'$ that contains $\cl C$.
\item[{\rm(iii)}]
If $\fequiv[\cl{C}]$ has finite index in $\cl{O}_A$
and $B$ is a nonempty subset of $A$ such that 
every operation in $\cl C$ preserves $B$, then
$\fequiv[\cl{C}|_B]$ has finite index in $\cl{O}_B$.
\end{enumerate}
\end{proposition}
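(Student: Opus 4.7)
The plan is to prove (i) first, then reduce both (ii) and (iii) to it. For (i), the backward implication is immediate, since $A$ being finite means there are only $|A|^{|A|^d}$ operations of arity $d$ on $A$. For the forward implication, I would pick one representative from each of the finitely many $\cl{C}$-equivalence classes, let $d$ be the maximum of their arities, and check that every $m$-ary operation $g$ with $m\le d$ is $\cl{C}$-equivalent to its dummy-padded version $\tilde g(a_1,\ldots,a_d)=g(a_1,\ldots,a_m)$: in one direction via $\tilde g=g\circ(p_1^{(d)},\ldots,p_m^{(d)})$, and in the other via $g=\tilde g\circ(p_1^{(m)},\ldots,p_m^{(m)},p_1^{(m)},\ldots,p_1^{(m)})$. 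Both tuples of operations belong to $\cl{C}$ because every clone contains the projections. Statement (ii) is then immediate from Proposition~\ref{basic_props}(iv): since $\cl{C}\subseteq\cl{C}'$ implies $\fequiv[\cl{C}]\subseteq\fequiv[\cl{C}']$, each $\cl{C}'$-class is a union of $\cl{C}$-classes, so the index can only decrease.

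For (iii) I would invoke the characterization from (i). Let $d$ be the bound it provides for $\cl{C}$, and let $f$ be an $n$-ary operation on $B$. I would extend $f$ to an operation $\hat f\in\cl{O}_A^{(n)}$ by setting $\hat f(\vect a)=f(\vect a)$ for $\vect a\in B^n$ and $\hat f(\vect a)=c_0$ for some fixed $c_0\in B$ otherwise; the crucial point is that $\hat f$ takes all its values in $B$, even on inputs outside $B^n$. Applying (i) to $\hat f$ yields a $d$-ary $g\in\cl{O}_A$ together with tuples $\vect h\in(\cl{C}^{(n)})^d$ and $\vect h'\in(\cl{C}^{(d)})^n$ such that $\hat f=g\circ\vect h$ and $g=\hat f\circ\vect h'$. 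Since every component of $\vect h$ and $\vect h'$ preserves $B$, the restrictions $\vect h|_{B^n}$ and $\vect h'|_{B^d}$ are tuples from $\cl{C}|_B$; moreover, the identity $g=\hat f\circ\vect h'$ together with the $B$-valuedness of $\hat f$ forces $g(\vect b)\in B$ for every $\vect b\in B^d$. Thus $g^{\ast}:=g|_{B^d}$ is a genuine $d$-ary operation on $B$, and restricting the two displayed identities to $B^n$ and $B^d$ respectively exhibits $f\fequiv[\cl{C}|_B]g^{\ast}$. Applying (i) in the reverse direction to the clone $\cl{C}|_B$ then gives that $\fequiv[\cl{C}|_B]$ has finite index in $\cl{O}_B$.

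The main obstacle is the $B$-valuedness issue in (iii): a priori the $d$-ary witness $g$ produced by (i) is only guaranteed to be an operation on $A$, and there is no reason for its restriction to $B^d$ to land in $B$. This is precisely why the extension $\hat f$ must be taken to be $B$-valued everywhere at the outset, so that the identity $g=\hat f\circ\vect h'$, composed through the $B$-preserving tuple $\vect h'$, forces $g$ itself to be $B$-valued on $B^d$. Once this is in place, the rest of (iii) is just bookkeeping: all the compositions witnessing $\cl{C}$-equivalence restrict cleanly to compositions witnessing $\cl{C}|_B$-equivalence.
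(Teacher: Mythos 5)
Your proposal is correct and follows essentially the same route as the paper's proof: part (i) via a finite transversal plus padding with projections, part (ii) from Proposition~\ref{basic_props}~(iv), and part (iii) by extending to $A$, applying (i), and restricting back to $B$. The only (harmless) difference is in (iii): you force the extension $\hat f$ to be $B$-valued everywhere so that $g=\hat f\circ\vect{h}'$ makes $g$ preserve $B$, whereas the paper extends arbitrarily and instead reads off $B$-preservation of the $d$-ary witness from the identity $f'=f\circ\vect{h}$, using that $f$ and the components of $\vect{h}$ all preserve $B$ --- in fact that same observation shows your ``crucial point'' is not actually needed, since $\vect{h}'$ already maps $B^d$ into $B^n$.
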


\begin{proof}
(i) The number of $d$-ary operations on $A$ is finite,
since $A$ is finite. Therefore
if every operation on $A$ is $\cl C$-equivalent
to a $d$-ary operation on $A$, 
then $\fequiv[\cl{C}]$ has finite index in $\cl{O}_A$. 
Conversely, assume that $\fequiv[\cl{C}]$ has finite index in $\cl{O}_A$, and select a transversal $T$ for the blocks
of $\fequiv[\cl{C}]$. Since $T$ is finite, there is a $d>0$
such that every operation in $T$ is at most $d$-ary.
Now we will argue that for each operation $f\in T$, the $d$-ary operation $f^*$ obtained by adding 
fictitious variables to $f$ 
is $\cl{C}$-equivalent to $f$.
If $f$ is $k$-ary ($k\le d$), then
$f^*=f\circ(p_1^{(d)},\ldots,p_k^{(d)})$,
so $f^*\subf[\cl{C}] f$.
Since $(p_1^{(d)},\ldots,p_k^{(d)})\circ
(p_1^{(k)},\ldots,p_k^{(k)},
p_k^{(k)},\ldots,p_k^{(k)})=\vect{p}^{(k)}$,
we also get that 
$f^*\circ(p_1^{(k)},\ldots,p_k^{(k)},
p_k^{(k)},\ldots,p_k^{(k)})
=f\circ\vect{p}^{(k)}=f$,
so $f\subf[\cl{C}] f^*$.
Thus every operation on $A$ is $\cl{C}$-equivalent
to one of the $d$-ary operations $f^*$, $f\in T$.

(ii) follows immediately from 
Proposition~\ref{basic_props}~(iv).

(iii) Suppose that 
$\fequiv[\cl{C}]$ has finite index in $\cl{O}_A$.
By (i) there is an integer $d>0$ 
such that every operation
on $A$ is $\cl{C}$-equivalent to a $d$-ary
operation on $A$.
Now assuming
that $B$ is a nonempty subset of $A$ such that 
every operation in $\cl C$ preserves $B$
we will show that every operation
on $B$ is $\cl{C}|_B$-equivalent to a $d$-ary
operation on $B$.

Let $g$ be an $n$-ary operation on $B$.
Extend $g$ arbitrarily to an $n$-ary operation 
$f$ on $A$. Thus $f$ preserves $B$ and
$f|_B=g$.
By our assumption on $\cl{C}$, $f$ is $\cl{C}$-equivalent 
to a $d$-ary operation $f'$ on $A$.
Hence there exist $\vect{h}\in(\cl{C^{(d)}})^n$ and
$\vect{h}'\in(\cl{C}^{(n)})^d$ such that
$f'=f\circ\vect{h}$ and $f=f'\circ\vect{h}'$.
Since $f$ preserves $B$ (by construction) and 
the operations in $\cl C$ preserve $B$ (by assumption),
$f'=f\circ\vect{h}$ also preserves $B$.
Thus
$f'|_B=f|_B\circ\vect{h}|_B$ and 
$f|_B=f'|_B\circ\vect{h}'|_B$ where
all operations in 
$\vect{h}|_B$ and
$\vect{h}'|_B$ belong to $\cl{C}|_B$.
This proves that $g=f|_B$ is $\cl{C}|_B$-equivalent 
to the $d$-ary operation $f'|_B$.
\end{proof}

\section{The relation $\subf[\cl{C}]$ for discriminator
clones $\cl C$}

Let $A$ be an arbitrary set.
The \emph{discriminator function} on $A$ is the ternary
operation $t$ defined as follows:
\[
t(x,y,z) =
\begin{cases}
z, & \text{if $x = y$,} \\
x, & \text{otherwise}
\end{cases}
\qquad(x,y,z\in A).
\]
A clone on $A$ will be called a 
\emph{discriminator clone} if it contains $t$.

Let $\cl{C}$ be a clone on $A$.
An $n$-ary operation $f$ on $A$ is said to be
{\it locally in $\cl{C}$} if for every finite
subset $U$ of $A^n$ there exists an 
$n$-ary operation $g$ in $\cl{C}$ such that
$f(u)=g(u)$ for all $u\in U$.
The clone $\cl{C}$ is called {\it locally closed}
if $f\in\cl{C}$ for every operation $f$ that is locally
in $\cl{C}$.
It is easy to see from this definition that
if $A$ is finite, then every clone on $A$ is 
locally closed.
Examples of locally closed clones on an infinite set
$A$ include the clone of projections and the clone of all
operations on $A$.

Throughout this section $\cl{C}$ will be a 
locally closed discriminator clone on a set $A$,
and $\al A$ will denote the algebra $(A;\cl{C})$.
An isomorphism between subalgebras of $\al A$ is
called an \emph{internal isomorphism} of $\al A$.
We will use the notation $\Iso(\al A)$
for the family of all internal isomorphisms of $\al A$.

$\Iso(\al A)$ is a set of partial bijections that 
acts coordinatewise on $A^n$ for all $n\ge1$ as follows:
if $\vect{a}=(a_1,\ldots,a_n)\in A^n$, 
$\iota\in\Iso(\al A)$, and each $a_i$ is in the domain
of $\iota$, then 
$\iota(\vect{a})=\bigl(\iota(a_1),\ldots,\iota(a_n)\bigr)$;
otherwise $\iota(\vect{a})$ is undefined.
We will follow the convention that when we 
talk about elements $\iota(\vect{a})$ ``for some
[all] $\iota\in\Iso(\al A)$'' we will always
mean ``for some
[all] $\iota\in\Iso(\al A)$ for which $\iota(\vect{a})$
is defined''.

Since $\Iso(\al A)$ is closed under composition and 
inverses, the relation $\sim_\cl{C}$ on $A^n$ defined 
for all $\vect{a},\vect{b}\in A^n$ by
\[
\vect{a}\sim_\cl{C}\vect{b}
\quad\Leftrightarrow\quad
\vect{b}=\iota(\vect{a})
\text{ for some $\iota\in\Iso(\al A)$}
\]
is an equivalence relation whose blocks are the
$\Iso(\al A)$-orbits
\[
\vect{a}/{\sim_\cl{C}}=\{\iota(\vect{a}):\iota\in\Iso(\al A)\},
\quad \vect{a}\in A^n.
\]
We will choose and fix a transversal $T_n$ for the
blocks of $\sim_\cl{C}$ in $A^n$.

For an $n$-tuple $\vect{a}=(a_1,\ldots,a_n)$
let $\al{S}^\cl{C}_\vect{a}$ denote the subalgebra of
$\al A$ generated by
the set $\{a_1,\ldots,a_n\}$ of coordinates of $\vect{a}$.
Now let $\vect{a}\in A^n$ and  $\vect{b}\in A^m$ be
such that $\al{S}^\cl{C}_\vect{b}\le\al{S}^\cl{C}_\vect{a}$;
in other words, $\vect{b}\in(\al{S}^\cl{C}_\vect{a})^m$.
If $\iota_1,\iota_2\in\Iso(\al A)$ are internal
isomorphisms of $\al A$
such that $\iota_1(\vect{a})=\iota_2(\vect{a})$,
then $\iota_1,\iota_2$ agree on a generating set of
$\al{S}^\cl{C}_\vect{a}$. 
Thus $\iota_1,\iota_2$ are defined and
agree on $\al{S}^\cl{C}_\vect{a}$, and hence on
$\al{S}^\cl{C}_\vect{b}$.
This implies that 
$\iota_1(\vect{b})=\iota_2(\vect{b})$.
Thus
\[
\Phi^\cl{C}_{\vect{b},\vect{a}}\colon 
\vect{a}/{\sim_\cl{C}}\to\vect{b}/{\sim_\cl{C}},
\quad
\iota(\vect{a})\mapsto\iota(\vect{b})
\text{ for all $\iota\in\Iso(\al A)$}
\]
is a well-defined mapping of the $\sim_\cl{C}$-block
of $\vect{a}$ onto the $\sim_\cl{C}$-block of $\vect{b}$.
Notice that $\Phi^\cl{C}_{\vect{b},\vect{a}}$ is the unique
mapping $\vect{a/{\sim_\cl{C}}}\to\vect{b}/{\sim_\cl{C}}$ that 
sends $\vect{a}$ to $\vect{b}$ and preserves
all internal isomorphisms of $\al A$.
 
\begin{lemma}
\label{lm-h}
Let $\cl{C}$ be a locally closed
discriminator clone on a set $A$.
The following conditions on a function $\vect{h}\colon A^n\to A^m$ are equivalent:
\begin{enumerate}
\item[{\rm(a)}]
$\vect{h}\colon A^n\to A^m$ belongs to
$(\cl C^{(n)})^m$.
\item[{\rm(b)}]
$\vect{h}$ preserves the internal isomorphisms of $\al A$; 
that is, 
\[
\vect{h}\bigl(\iota(\vect{a})\bigr)=
\iota\bigl(\vect{h}(\vect{a})\bigr)
\quad
\text{for all $\iota\in\Iso(\al A)$}.
\]
\item[{\rm(c)}]
For each $n$-tuple $\vect{c}\in T_n$
there exists an $m$-tuple
$\vect{d}$ with $\al{S}^\cl{C}_\vect{d}\le \al{S}^\cl{C}_\vect{c}$
such that the restriction of $\vect{h}$ to
$\vect{c}/{\sim_\cl{C}}$ is the mapping $\Phi^\cl{C}_{\vect{d},\vect{c}}$.
\end{enumerate}
\end{lemma}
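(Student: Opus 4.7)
The plan is to prove the cyclic chain (a) $\Rightarrow$ (b) $\Rightarrow$ (c) $\Rightarrow$ (a). For (a) $\Rightarrow$ (b), I would simply observe that every $\iota \in \Iso(\al{A})$ is by definition an isomorphism between subalgebras of $\al{A}$ and hence commutes with each component $h_i$ of $\vect{h}$; in particular, $\vect{h}(\vect{a})$ lies in the same subalgebra as $\vect{a}$, so $\iota(\vect{h}(\vect{a}))$ is defined whenever $\iota(\vect{a})$ is, and the required equation holds coordinate by coordinate.

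For (b) $\Rightarrow$ (c), I would fix $\vect{c} \in T_n$ and set $\vect{d} := \vect{h}(\vect{c})$. Applying (b) to the internal isomorphism $\iota = \id_{\al{S}^\cl{C}_\vect{c}}$ forces $\iota(\vect{d})$ to be defined, so $\vect{d} \in (\al{S}^\cl{C}_\vect{c})^m$, i.e.\ $\al{S}^\cl{C}_\vect{d} \le \al{S}^\cl{C}_\vect{c}$. Then for any $\vect{a} = \iota(\vect{c})$ in $\vect{c}/\sim_\cl{C}$, (b) gives $\vect{h}(\vect{a}) = \iota(\vect{d}) = \Phi^\cl{C}_{\vect{d},\vect{c}}(\vect{a})$, which is exactly the conclusion of (c).

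For (c) $\Rightarrow$ (a), I would fix a coordinate $i$ and argue $h_i \in \cl{C}$; since $\cl{C}$ is locally closed, it suffices to approximate $h_i$ on every finite $U \subseteq A^n$ by members of $\cl{C}^{(n)}$. For each orbit representative $\vect{c} \in T_n$, condition (c) supplies a $d_i \in \al{S}^\cl{C}_\vect{c}$, which is realizable as $d_i = g_\vect{c}(\vect{c})$ for some $g_\vect{c} \in \cl{C}^{(n)}$. By the already-proved (a) $\Rightarrow$ (b), this $g_\vect{c}$ preserves internal isomorphisms and therefore automatically agrees with $h_i$ on the entire orbit $\vect{c}/\sim_\cl{C}$.

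The main obstacle is the final patching step: given the finitely many orbit representatives $\vect{c}_1, \ldots, \vect{c}_k$ whose orbits meet $U$, I must glue the corresponding $g_{\vect{c}_1}, \ldots, g_{\vect{c}_k}$ into a single operation in $\cl{C}^{(n)}$ agreeing with $h_i$ at every point of $U$. This is where the discriminator $t$ is indispensable: because tuples in distinct $\sim_\cl{C}$-orbits cannot be identified by any internal isomorphism, one can produce operations in $\cl{C}^{(n)}$ that separate points of different orbits on the finite set $U$, and iterated application of $t$ then pastes the $g_{\vect{c}_l}$ together into the required interpolant.
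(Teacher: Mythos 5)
Your implications (a)$\Rightarrow$(b) and (b)$\Rightarrow$(c) are correct, and the latter is essentially the paper's argument: set $\vect{d}=\vect{h}(\vect{c})$, use the identity automorphism of $\al{S}^\cl{C}_\vect{c}$ to see that $\al{S}^\cl{C}_\vect{d}\le\al{S}^\cl{C}_\vect{c}$, then compute on the orbit. The divergence is in how you close the cycle. The paper proves (c)$\Rightarrow$(b) by a short direct computation and then obtains (b)$\Rightarrow$(a) by quoting the Baker--Pixley theorem (Claim~\ref{bp}): for a locally closed discriminator clone, an operation belongs to $\cl{C}$ if and only if it preserves all internal isomorphisms of $\al A$. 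You instead attempt (c)$\Rightarrow$(a) directly, and the step you yourself label ``the main obstacle'' is precisely the content of that theorem; as written it is an acknowledgment of the gap rather than a proof.

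Concretely, two things are asserted without argument. First, that tuples $\vect{u},\vect{v}$ in distinct $\sim_\cl{C}$-orbits can be ``separated'' by members of $\cl{C}^{(n)}$: what you actually need is a pair $g,g'\in\cl{C}^{(n)}$ with $g(\vect{u})=g'(\vect{u})$ but $g(\vect{v})\ne g'(\vect{v})$ (or with the roles of $\vect{u}$ and $\vect{v}$ interchanged). This does follow, by considering the subalgebra of $\al A^2$ generated by the pairs $(u_i,v_i)$ and observing that it cannot be the graph of an isomorphism, but it must be said. Second, that ``iterated application of $t$ pastes the $g_{\vect{c}_l}$ together'': the term $t(g,g',z)$ returns $z(\vect{u})$ at $\vect{u}$ and the fixed value $g(\vect{v})$ at $\vect{v}$, and upgrading such one-point switches to a single interpolant agreeing with $h_i$ simultaneously at every point of $U$ requires an induction (on the number of orbits meeting $U$, or on $\card{U}$ as in Baker--Pixley) that you have not carried out. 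Nothing you say is false and the route is viable, but the hardest part of the lemma has been deferred rather than proved; either carry out this interpolation argument in full or, as the paper does, prove (c)$\Rightarrow$(b) and invoke Baker--Pixley for (b)$\Rightarrow$(a).
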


\begin{proof}
Since 
$\cl{C}$ is a locally closed clone, 
therefore $\cl{C}$ is the clone 
of local term operations 
of the algebra $\al A=(A;\cl{C})$.
The assumption that $t\in\cl{C}$,
combined with a theorem of 
Baker and Pixley~\cite{BP}, implies the
following well-known claim.

\begin{clm}
\label{bp} 
An operation $g\in\cl{O}_A$ belongs to $\cl{C}$
if and only if $g$ preserves all internal isomorphisms
of $\al A$.
\end{clm}

This implies that an analogous statement holds for
$m$-tuples of operations as well. 
Hence conditions (a) and (b) are equivalent.
It remains to show that conditions (b) and (c) are
equivalent.

First we will show that (b) $\Rightarrow$ (c).
Let $\vect{h}\in(\cl{C}^{(n)})^m$,
and let $\vect{c}\in T_n$.
Since $\vect{h}$
preserves all internal isomorphisms 
of $\al A$, it preserves, in particular, the identity
automorphism of each subalgebra of $\al A$. 
Hence $\vect{h}$ preserves all subalgebras of $\al A$.
This implies that the coordinates of the $m$-tuple 
$\vect{d}=\vect{h}(\vect{c})$ are in $\al{S}^\cl{C}_\vect{c}$.
Hence $\al{S}^\cl{C}_\vect{d}\le\al{S}^\cl{C}_\vect{c}$.
Moreover,
\[
\vect{h}\bigl(\iota(\vect{c})\bigr)=
\iota\bigl(\vect{h}(\vect{c})\bigr)=
\iota(\vect{d})=
\Phi^\cl{C}_{\vect{d},\vect{c}}\bigl(\iota(\vect{c})\bigr)
\quad
\text{for all $\iota\in\Iso(\al A)$}.
\]
This shows that $\vect{h}|_{\vect{c}/{\sim_\cl{C}}}$
coincides with $\Phi^\cl{C}_{\vect{d},\vect{c}}$, as claimed in
(c).

To prove the implication (c)~$\Rightarrow$~(b)
assume that $\vect{h}$ satisfies 
condition (c), and let $\kappa$ be an internal 
isomorphism of $\al A$. 
We have to show that $\vect{h}$
preserves $\kappa$. 
Let $\vect{a}$ be an arbitrary element of $A^n$
such that $\kappa(\vect{a})$ is defined, 
and let $\vect{c}$ be the representative of
the orbit $\vect{a}/{\sim_\cl{C}}$ in $T_n$.
There exists $\iota\in\Iso(\al A)$ such that 
$\vect{a}=\iota(\vect{c})$. 
Hence $\kappa(\vect{a})=(\kappa\circ\iota)(\vect{c})$.
Since $\vect{h}$ satisfies condition (c), there exists
$\vect{d}\in A^m$ with 
$\al{S}^\cl{C}_\vect{d}\le\al{S}^\cl{C}_\vect{c}$
such that the equality
$\vect{h}(\lambda(\vect{c}))=
\Phi^\cl{C}_{\vect{d},\vect{c}}(\lambda(\vect{c}))$
holds for all $\lambda\in\Iso(\al A)$.
Using this equality for $\lambda=\kappa\circ\iota$ and $\lambda=\iota$ (2nd and 6th equalities below),
the definition of $\Phi^\cl{C}_{\vect{d},\vect{c}}$ (3rd and 5th equalities), and the relationship between $\vect{a}$
and $\vect{c}$ (1st and 7th equalities),
we get that
\begin{multline}
\vect{h}\bigl(\kappa(\vect{a})\bigr)=
\vect{h}\bigl((\kappa\circ\iota)(\vect{c})\bigr)=
\Phi^\cl{C}_{\vect{d},\vect{c}}
    \bigl((\kappa\circ\iota)(\vect{c})\bigr)=
(\kappa\circ\iota)(\vect{d})\\
=
\kappa\bigl(\iota(\vect{d})\bigr)=
\kappa\bigl(\Phi^\cl{C}_{\vect{d},\vect{c}}
     (\iota(\vect{c}))\bigr)=
\kappa\bigl(\vect{h}(\iota(\vect{c}))\bigr)=
\kappa\bigl(\vect{h}(\vect{a})\bigr).\notag
\end{multline}
This proves that $\vect{h}$ preserves $\kappa$, and hence
completes the proof of the lemma.  
\end{proof}

\begin{theorem}
\label{le_C}
Let $\cl{C}$ be a locally closed
discriminator clone on a set $A$.
The following conditions on $f\in\cl{O}_A^{(n)}$ and
$g\in\cl{O}_A^{(m)}$ are equivalent:
\begin{enumerate}
\item[{\rm(a)}]
$f\subf[\cl{C}]g$.
\item[{\rm(b)}]
For each $\sim_\cl{C}$-block $P=\vect{c}/{\sim_\cl{C}}$\ \  
$(\vect{c}\in T_n)$ in $A^n$ there exists a 
$\sim_\cl{C}$-block $Q=\vect{d}/{\sim_\cl{C}}$ in $A^m$
such that $\al{S}^\cl{C}_\vect{d}\le\al{S}^\cl{C}_\vect{c}$ and 
$f|_P=g|_Q\circ\Phi^\cl{C}_{\vect{d},\vect{c}}$.
\end{enumerate}
\end{theorem}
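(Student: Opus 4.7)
The plan is to reduce Theorem~\ref{le_C} to Lemma~\ref{lm-h}. Indeed, $f \subf[\cl{C}] g$ means that $f = g \circ \vect{h}$ for some $\vect{h} \in (\cl{C}^{(n)})^m$, and Lemma~\ref{lm-h}(c) characterises such $\vect{h}$ block by block on the $\sim_\cl{C}$-partition of $A^n$. Thus (a) and (b) should turn out to be two ways of encoding the same information: (a) packages $\vect{h}$ as a single function on $A^n$, while (b) records its restrictions to the orbits $\vect{c}/{\sim_\cl{C}}$ with $\vect{c}\in T_n$.

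For (a)~$\Rightarrow$~(b), I would fix $\vect{h}\in(\cl{C}^{(n)})^m$ with $f = g\circ\vect{h}$ and, for each $\vect{c}\in T_n$, apply Lemma~\ref{lm-h}(c) to obtain an $m$-tuple $\vect{d}$ with $\al{S}^\cl{C}_\vect{d}\le \al{S}^\cl{C}_\vect{c}$ and $\vect{h}|_P = \Phi^\cl{C}_{\vect{d},\vect{c}}$, where $P = \vect{c}/{\sim_\cl{C}}$. By the very definition of $\Phi^\cl{C}_{\vect{d},\vect{c}}$, its image is the block $Q = \vect{d}/{\sim_\cl{C}}$, so postcomposing with $g$ yields $f|_P = g|_Q \circ \Phi^\cl{C}_{\vect{d},\vect{c}}$, as required by (b).

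For the converse (b)~$\Rightarrow$~(a), I would assemble the desired $\vect{h}$ piecewise. For each $\vect{c}\in T_n$ pick the tuple $\vect{d}_\vect{c}$ supplied by (b), and set $\vect{h}|_{\vect{c}/{\sim_\cl{C}}} := \Phi^\cl{C}_{\vect{d}_\vect{c},\vect{c}}$. Because $T_n$ is a transversal, the orbits $\vect{c}/{\sim_\cl{C}}$ ($\vect{c}\in T_n$) partition $A^n$, so this unambiguously defines a function $\vect{h}\colon A^n\to A^m$. The containment $\al{S}^\cl{C}_{\vect{d}_\vect{c}}\le\al{S}^\cl{C}_\vect{c}$ assumed in (b) is exactly what is needed to apply the implication (c)~$\Rightarrow$~(a) of Lemma~\ref{lm-h}, placing $\vect{h}$ in $(\cl{C}^{(n)})^m$. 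The block-wise identities in (b) then glue together to the single equation $f = g\circ\vect{h}$, so $f\subf[\cl{C}] g$.

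I do not anticipate any genuine obstacle: Lemma~\ref{lm-h} absorbs the hard content of the statement, and the rest is essentially bookkeeping about orbits and transversals. The only subtlety I would be careful to state explicitly is that the codomain of $\Phi^\cl{C}_{\vect{d},\vect{c}}$ is precisely the block $Q = \vect{d}/{\sim_\cl{C}}$, so that the composition $g|_Q\circ\Phi^\cl{C}_{\vect{d},\vect{c}}$ appearing in (b) is indeed well-formed; this is immediate from the definition of $\Phi^\cl{C}_{\vect{d},\vect{c}}$ preceding the lemma.
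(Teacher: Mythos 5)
Your proposal is correct and coincides with the paper's own proof: both directions are reduced to Lemma~\ref{lm-h} in exactly the way you describe, with (a)~$\Rightarrow$~(b) reading off the block-wise form of $\vect{h}$ from condition (c) of the lemma, and (b)~$\Rightarrow$~(a) assembling $\vect{h}$ piecewise over the transversal $T_n$ and invoking (c)~$\Rightarrow$~(a). No gaps; the remark about the codomain of $\Phi^\cl{C}_{\vect{d},\vect{c}}$ being $Q=\vect{d}/{\sim_\cl{C}}$ is also implicit in the paper's argument.
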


\begin{proof}
(a)~$\Rightarrow$~(b).
If $f\subf[\cl{C}]g$, then $f=g\circ\vect{h}$ for some
$\vect{h}\in(\cl{C}^{(n)})^m$. 
Lemma~\ref{lm-h} shows that for each 
$\vect{c}\in T_n$ there exists $\vect{d}\in A^m$ with
$\al{S}^\cl{C}_\vect{d}\le\al{S}^\cl{C}_\vect{c}$ 
such that by restricting 
$\vect{h}$ to $P=\vect{c}/{\sim_\cl{C}}$
we get the function 
$\vect{h}|_P=\Phi^\cl{C}_{\vect{d},\vect{c}}\colon 
P \to Q=\vect{d}/{\sim_\cl{C}}$.
Thus $f|_P=(g\circ\vect{h})|_P=g|_Q\circ\vect{h}|_P
=g|_Q\circ\Phi^\cl{C}_{\vect{d},\vect{c}}$.

(b)~$\Rightarrow$~(a).
Assume that condition (b) holds for $f$ and $g$.
For each $\vect{c}\in T_n$ fix a tuple 
$\vect{d}=\vect{d}_\vect{c}$ whose existence is
postulated in condition (b).
Since every $\sim_\cl{C}$-block 
$P$ in $A^n$ is of the form
$P=\vect{c}/{\sim_\cl{C}}$ for a unique $\vect{c}\in T_n$,
there is a (well-defined) function 
$\vect{h}\colon A^n\to A^m$ such that 
$\vect{h}|_P=\Phi^\cl{C}_{\vect{d}_\vect{c},\vect{c}}$ for all
$\sim_\cl{C}$-blocks $P$ in $A^n$.
Lemma~\ref{lm-h} implies that 
$\vect{h}\in(\cl{C}^{(n)})^m$.
Moreover, we have $f=g\circ\vect{h}$, because
condition (b) and the construction of $\vect{h}$ yield that
$f|_P=g|_Q\circ\Phi^\cl{C}_{\vect{d}_\vect{c},\vect{c}}=
g|_Q\circ\vect{h}|_P=(g\circ\vect{h})|_P$
for all $\sim_\cl{C}$-blocks $P$ in $A^n$.
Thus $f\subf[\cl{C}]g$.
\end{proof}

We conclude this section by applying Theorem~\ref{le_C}
to the clone $\cl{C}=\cl{O}_A$, which is clearly
a locally closed discriminator clone for every set $A$.
If $\cl{C}=\cl{O}_A$, then the algebra 
$\al{A}=(A;\cl{O}_A)$ has no proper subalgebras and no
nonidentity automophisms.
Therefore $\vect{a}/{\sim_{\cl{O}_A}}=\{\vect{a}\}$ and   
$\al{S}_\vect{a}^{\cl{O}_A}=A$ for all
$\vect{a}\in A^n$, $n\ge1$. 
Moreover, 
$\Phi_{\vect{b},\vect{a}}^{\cl{O}_A}$ is the unique 
mapping $\{\vect{a}\}\to\{\vect{b}\}$.
Thus condition (b) in Theorem~\ref{le_C} for
$\cl{C}=\cl{O}_A$ requires the following:
for every block
$P=\{\vect{c}\}$ in $A^n$, if 
$f|_P\colon \{\vect{c}\}\to\{r\}$, then 
there exists a block $Q=\{\vect{d}\}$ in $A^m$
such that 
$g|_Q\colon \{\vect{d}\}\to\{r\}$;
that is, every element $r$ that is in the range 
$\Im(f)$ of $f$
is also in the range 
$\Im(g)$ of $g$.
Hence Theorem~\ref{le_C} yields the following result
from \cite{Henno} (see also \cite{leht}):

\begin{corollary}
\label{cor-O}
Let $A$ be a set.
For arbitrary operations $f\in\cl{O}_A^{(n)}$ and
$g\in\cl{O}_A^{(m)}$,
$$
f\subf[\cl{O}_A]g
\qquad
\text{if and only if}
\qquad
\Im(f)\subseteq\Im(g).
$$
\end{corollary}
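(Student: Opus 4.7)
The plan is to apply Theorem~\ref{le_C} to the case $\cl{C} = \cl{O}_A$ and to read off condition~(b) in that setting. First I would note that $\cl{O}_A$ is a locally closed discriminator clone for any set $A$: it plainly contains the discriminator $t$, and every operation that is locally in $\cl{O}_A$ is itself an operation on $A$, hence already in $\cl{O}_A$.

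Next I would determine $\Iso(\al{A})$ and the subalgebras of $\al{A} = (A; \cl{O}_A)$. Since $\cl{O}_A$ contains every unary constant operation $c_a \colon x \mapsto a$, any nonempty subset closed under all operations must equal $A$; so $A$ is the unique subalgebra of $\al{A}$. Every internal isomorphism of $\al{A}$ is therefore an automorphism of $A$, and the identity $\iota \circ c_a = c_a \circ \iota$ forces $\iota(a) = a$ for all $a \in A$; so $\Iso(\al{A}) = \{\id_A\}$. Consequently $\sim_{\cl{O}_A}$ is the equality relation on each $A^n$, every $\sim_{\cl{O}_A}$-block is a singleton $\{\vect{a}\}$, and one may take $T_n = A^n$. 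The inclusion $\al{S}^\cl{C}_\vect{d} \le \al{S}^\cl{C}_\vect{c}$ in Theorem~\ref{le_C}(b) is automatic since both subalgebras equal $A$, and $\Phi^{\cl{O}_A}_{\vect{d},\vect{c}}$ is the unique function $\{\vect{c}\} \to \{\vect{d}\}$.

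Finally I would translate condition~(b) through these identifications. For $\vect{c} \in A^n$, the restriction $f|_{\{\vect{c}\}}$ is determined by the single value $f(\vect{c})$, and $g|_{\{\vect{d}\}} \circ \Phi^{\cl{O}_A}_{\vect{d},\vect{c}}$ by $g(\vect{d})$; so condition~(b) becomes: for every $\vect{c} \in A^n$ there exists $\vect{d} \in A^m$ with $f(\vect{c}) = g(\vect{d})$. Quantifying over $\vect{c}$ this says precisely $\Im(f) \subseteq \Im(g)$, yielding the corollary. There is no real obstacle here beyond careful bookkeeping; the only conceptual point is that the presence of all unary constant operations in $\cl{O}_A$ collapses the subalgebra lattice and the internal isomorphism structure to the trivial case, so Theorem~\ref{le_C}(b) reduces to a pointwise statement about ranges.
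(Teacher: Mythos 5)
Your proposal is correct and follows exactly the paper's own route: specialize Theorem~\ref{le_C} to $\cl{C}=\cl{O}_A$, observe that $(A;\cl{O}_A)$ has no proper subalgebras and only the identity internal isomorphism (so the $\sim_{\cl{O}_A}$-blocks are singletons and the subalgebra condition is vacuous), and read off condition~(b) as $\Im(f)\subseteq\Im(g)$. The only difference is that you spell out, via the unary constants, why the subalgebra and automorphism structure collapses, a point the paper asserts without proof.
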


Further applications of Theorem~\ref{le_C}
will appear in Sections~4 and 5.

\section{Finiteness and minimality}

Let $A$ be a finite set, and
let $\FF_A$ denote the family of all clones $\cl{C}$
on $A$ such that there are only finitely many
$\cl{C}$-equivalence classes of operations on $A$ 
(that is, $\fequiv[\cl{C}]$ has finite index 
in $\cl{O}_A$).
Lemma~\ref{basic_props2}~(ii) shows that $\FF_A$
is a filter in the lattice of all clones on $A$.
By Corollary~\ref{cor-O}, the clone
$\cl{O}_A$ belongs to $\FF_A$.

Our goal in this section is to prove that
all discriminator clones belong to $\FF_A$.
Since $\FF_A$ is a filter,
it will be sufficient to show that the
smallest discriminator clone 
$\cl{D}=\langle t\rangle$ belongs to $\FF_A$.
We will also prove that $\cl{D}$ is a minimal
member of $\FF_A$, that is, no proper subclone
of $\cl{D}$ belongs to $\FF_A$.

Our main result is

\begin{theorem}
\label{main-thm}
Let $A$ be a finite set of cardinality $|A|=k\ge2$,
and let $\cl{D}$ be the clone
generated by the discriminator function on $A$.
For $d= k^k-k^{k-1}+1$,
every operation on $A$ is $\cl{D}$-equivalent
to a $d$-ary operation on $A$.
\end{theorem}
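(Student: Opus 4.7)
The plan is to apply Theorem~\ref{le_C} to convert $\cl D$-equivalence into a combinatorial comparison of orbit restrictions, and then construct a $d$-ary $f^*$ that matches the essential orbit data of $f$.

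First, I would identify $\Iso(\al A)$ explicitly: every partial bijection of $A$ preserves $t$, and conversely every member of $\cl D$ preserves all internal isomorphisms by Claim~\ref{bp}. Hence $\Iso(\al A)$ is precisely the set of partial bijections of $A$, and the $\sim_{\cl D}$-orbits in $A^n$ are in bijection with the partitions $\pi$ of $[n]$ into at most $k$ blocks. After fixing an enumeration $a_1,\dots,a_k$ of $A$, one may take the transversal $T_n$ so that the representative $\vect{c}_\pi$ has support $\{a_1,\dots,a_{|\pi|}\}$, identifying each orbit of support size $s$ canonically with $\operatorname{Inj}([s],A)$. In these coordinates, condition~(b) of Theorem~\ref{le_C} for $f\subf[\cl D] g$ becomes: for every partition $\pi$ of $[n]$ there exist a partition $\pi'$ of $[m]$ with $s':=|\pi'|\le s:=|\pi|$ and an ordered subset $(i_1,\dots,i_{s'})$ of $[s]$ such that $f|_{P_\pi}(x_1,\dots,x_s)=g|_{Q_{\pi'}}(x_{i_1},\dots,x_{i_{s'}})$ for all distinct $x_1,\dots,x_s\in A$.

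Next, to each orbit restriction $f|_P$ I attach its \emph{essential restriction}, a minimum-arity function $\phi\colon\operatorname{Inj}([s'],A)\to A$ through which $f|_P$ factors via such a coordinate-selection map. A key observation is that no function $\operatorname{Inj}([k],A)\to A$ is essential at arity $k$, because $|\operatorname{Inj}([k],A)|=|\operatorname{Inj}([k-1],A)|=k!$ makes each forgetting-one-coordinate map a bijection; so every essential restriction has arity at most $k-1$, and the collection of essential restrictions appearing in any $f$ is finite and bounded purely in terms of $k$. One then verifies that $f\fequiv[\cl D] f^*$ is equivalent to $f$ and $f^*$ producing the same set of essential restrictions.

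The construction of $f^*$ of arity $d$ then proceeds by enumerating the distinct essential restrictions $\phi_1,\dots,\phi_N$ of $f$ (of arities $s'_1,\dots,s'_N\le k-1$), reserving for each $\phi_j$ an as-yet-unused orbit $Q_j$ of $A^d$ of support size at least $s'_j$, setting $f^*|_{Q_j}$ to equal $\phi_j$ post-composed with an appropriate coordinate projection, and filling the remaining orbits of $A^d$ so that their essential restrictions already lie in $\{\phi_1,\dots,\phi_N\}$; applying Theorem~\ref{le_C} in both directions then yields $f\fequiv[\cl D] f^*$. The main obstacle is to verify that $d=k^k-k^{k-1}+1$ is large enough for this scheme always to succeed: this reduces to a combinatorial count showing that the Stirling-many orbits of $A^d$ at support sizes $1,\dots,k$ can absorb any possible set of essential restrictions, and I expect the bookkeeping at support size $k-1$—where essential restrictions proliferate and matching them to sufficiently many orbits becomes tight—to be the decisive step forcing the specific form $(k-1)k^{k-1}+1$.
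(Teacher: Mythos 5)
Your high-level strategy --- use Theorem~\ref{le_C} to turn $\subf[\cl{D}]$ into an orbit-by-orbit condition, extract from $f$ a bounded package of combinatorial data, and build a $d$-ary $f^*$ carrying the same data --- is exactly the paper's, and your identification of $\Iso(\al A)$ with the partial bijections of $A$ and of the $\sim_\cl{D}$-orbits with kernel partitions is correct. The gap is in your invariant: the \emph{set of essential restrictions} of $f$ does not determine the $\fequiv[\cl{D}]$-class, because it forgets the breadth (support size) of the orbit on which each restriction occurs, while the clause $\al{S}^\cl{C}_\vect{d}\le\al{S}^\cl{C}_\vect{c}$ in Theorem~\ref{le_C}(b) forces the witnessing orbit to have breadth \emph{at most} that of the given orbit. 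Concretely, let $f\colon A^k\to A$ be $x_1$ on injective tuples and constantly $0$ on all tuples with a repeated coordinate. Its essential restrictions are the identity and the constant $0$, both of arity $1$, so your recipe puts the identity on a breadth-$1$ orbit $Q$ of $A^d$, i.e.\ $f^*(a,\ldots,a)=a$. Testing $f^*\subf[\cl{D}]f$ at $Q$, the only orbits of $A^k$ admissible as witnesses are those of breadth $1$, where $f$ is constantly $0$, not the identity; hence $f^*\not\subf[\cl{D}]f$ even though $f$ and $f^*$ have the same set of essential restrictions. The identity occurs in $f$ only at breadth $k$, and that breadth must be recorded. The correct (sufficient) invariant, used in the paper, is: for each $r$, the set of restrictions of $f$ to breadth-$r$ orbits, taken up to a permutation of the $r$ values; one then reproduces that same set on breadth-$r$ orbits of $A^d$ for the \emph{same} $r$. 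Your observation that every function on the injective $k$-tuples factors through $k-1$ coordinates is true as a factorization statement, but it cannot be used to relocate data to orbits of smaller breadth.

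Separately, the quantitative claim that $d=k^k-k^{k-1}+1$ suffices is asserted, not proved. With the corrected invariant the requirement is that for each $r$ with $2\le r\le k$ the number $S(d,r)$ of breadth-$r$ orbits of $A^d$ (a Stirling number of the second kind, bounded below by $r^{d-r}$) is at least the number $N(k,r)$ of permutation-classes of functions from the breadth-$r$ orbit of $A^r$ into $A$. The paper closes this with $N(k,r)\le k^{k(k-1)\cdots(k-r+1)}\le k^{k!}\le 2^{d-k}\le r^{d-r}$ for $k\ge3$ (using $k!<k^{k-1}$ and $k\le2^{k-1}$), together with the direct check $N(2,2)=3=S(3,2)$ for $k=2$. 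Your guess that breadth $k-1$ is the binding case is not what drives the bound --- the dominant count is simply $k^{k!}$, coming from $r=k$ --- and these estimates, though short, are the actual content of the theorem and must be carried out.
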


This theorem, combined with
Lemma~\ref{basic_props2}~(i) and (ii), 
immediately implies the corollary below
which states 
that all discriminator clones
belong to $\FF_A$.

\begin{corollary}
\label{main-cor}
For each discriminator clone $\cl{C}$ on a finite set
$A$ the equivalence relation $\fequiv[\cl{C}]$
has finite index in $\cl{O}_A$.
\end{corollary}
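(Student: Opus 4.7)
The plan is to show that, for $d=k^k-k^{k-1}+1$, every $n$-ary operation $f$ on $A$ is $\cl{D}$-equivalent to some $d$-ary operation $g$. I would apply Theorem~\ref{le_C} with $\cl{C}=\cl{D}$, which is legitimate because $A$ is finite, so $\cl{D}$ is locally closed.

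The first substantive step is to make the orbit structure of $\al A=(A;\cl{D})$ explicit. Since $t(x,y,z)\in\{x,z\}$ for all $x,y,z\in A$, every nonempty subset of $A$ is a subalgebra of $\al A$; a direct check shows every bijection between two such subsets preserves $t$, hence is an internal isomorphism. Consequently, $\vect{a}\sim_\cl{D}\vect{b}$ in $A^n$ iff $\vect{a}$ and $\vect{b}$ have the same \emph{equality pattern}: the $\sim_\cl{D}$-orbits in $A^n$ are indexed by the partitions of $\{1,\ldots,n\}$ into at most $k$ blocks, and an orbit whose representative $\vect{c}$ has $\al S^\cl{D}_\vect{c}$ of size $j$ has exactly $k!/(k-j)!$ elements.

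With this picture, condition (b) of Theorem~\ref{le_C} becomes a concrete encoding problem. For each orbit $P$ in $A^n$ with canonical representative $\vect{c}\in T_n$ and $\al S^\cl{D}_\vect{c}=:B$ of size $j$, the restriction $f|_P$ is equivalent, via the free $\Iso(\al A)$-action on $P$, to a single function $\hat f_P$ from the injections $B\hookrightarrow A$ into $A$, given by $\hat f_P(\iota)=f(\iota(\vect{c}))$; I shall call this the \emph{type} of $f$ at $P$. To build $g$ of arity $d$ with $f\fequiv[\cl{D}]g$, I would assign to each orbit $P$ an orbit $Q_P$ of $A^d$ together with a representative $\vect{d}_P$ whose coordinate set lies in $B$, and let the identity $g|_{Q_P}\circ\Phi^\cl{D}_{\vect{d}_P,\vect{c}}=f|_P$ define $g|_{Q_P}$. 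When $\vect{d}_P$ has coordinate set equal to $B$, the map $\Phi^\cl{D}_{\vect{d}_P,\vect{c}}$ is bijective and the definition is forced; two orbits $P,P'$ of $A^n$ sharing the same type may be sent to a common $Q$ with mutually consistent values of $g|_Q$, and both directions of $f\fequiv[\cl{D}]g$ will follow from Theorem~\ref{le_C}.

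The main obstacle is the combinatorial bookkeeping that makes $d=k^k-k^{k-1}+1=(k-1)k^{k-1}+1$ suffice. At each coordinate-set size $j\in\{1,\ldots,k\}$ the number of distinct types that $f$ can realize is bounded by a quantity depending only on $k$, and $A^d$ offers $S(d,j)$ orbits of coordinate-set size $j$, with further flexibility coming from the non-bijective factorizations $\Phi^\cl{D}_{\vect{d},\vect{c}}$ (which let a single orbit of $A^d$ simultaneously encode compatible types at several levels). The delicate point is to exhibit an explicit tuple $\vect{u}\in A^d$ whose $\sim_\cl{D}$-orbit, together with the orbits obtained by restricting its coordinates to smaller subalgebras, realizes every possible type; the decomposition $d=1+(k-1)k^{k-1}$ suggests $\vect{u}$ should consist of a single anchor coordinate with value $a_0\in A$ together with one coordinate for each of the $(k-1)k^{k-1}$ functions $A\to A$ that do not fix $a_0$.
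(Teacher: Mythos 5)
Your overall strategy is the right one and matches the paper's: reduce to the smallest discriminator clone $\cl{D}=\langle t\rangle$, describe the $\sim_\cl{D}$-orbits as equality patterns, observe via Theorem~\ref{le_C} that only the ``type'' of $f$ on each orbit matters, and then re-encode the finitely many types into $A^d$. Your description of $\Iso(\al A)$, of the orbits, and of the orbit sizes $k!/(k-j)!$ is correct, and the reduction of condition (b) of Theorem~\ref{le_C} to an encoding problem is sound. Two smaller points you leave implicit: the passage from $\cl{D}$ to an arbitrary discriminator clone $\cl{C}\supseteq\cl{D}$ needs the monotonicity statement of Proposition~\ref{basic_props2}~(ii); and for the direction $g\subf[\cl{D}]f$ you must define $g$ on \emph{every} orbit of $A^d$, including those not assigned to any $P$, in a way that introduces no new types --- the paper does this by reusing on the leftover orbits a type already realized by $f$.

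The genuine gap is that the quantitative heart of the argument is never carried out. You acknowledge that ``the main obstacle is the combinatorial bookkeeping'' that makes $d=k^k-k^{k-1}+1$ suffice and that ``the delicate point'' is to exhibit a suitable configuration of orbits in $A^d$, but you resolve neither. What has to be proved is an inequality of the form $N(k,r)\le S(d,r)$ for all $2\le r\le k$, where $N(k,r)$ bounds the number of possible types at breadth $r$ (at most $k^{k(k-1)\cdots(k-r+1)}\le k^{k!}$) and $S(d,r)$, the Stirling number of the second kind, counts the $\sim_\cl{D}$-orbits of breadth $r$ in $A^d$. The paper verifies this by chaining $k^{k!}\le 2^{d-k}\le r^{d-r}\le S(d,r)$, using $k!<k^{k-1}$ and $k\le 2^{k-1}$ for $k\ge3$, with $k=2$ checked by hand ($N(2,2)=3=S(3,2)$). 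Without such an estimate your argument shows only that \emph{some} arity suffices provided the counts work out, not that this particular $d$ does. Moreover, your proposed alternative --- a single ``universal'' tuple $\vect{u}\in A^d$ with one anchor coordinate and one coordinate for each function $A\to A$ not fixing $a_0$ --- cannot work as stated: a single orbit $Q$ of $A^d$ carries only one function $g|_Q$ and hence realizes only one type at its own breadth, whereas $f$ may realize many distinct types at the same breadth $r$; you need many pairwise distinct orbits of each breadth $r$, which is exactly what the lower bound $S(d,r)\ge r^{d-r}$ supplies.
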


For the proof of Theorem~\ref{main-thm} we will
use the description of $\subf[\cl{D}]$ in
Section~3.
Recall that since $A$ is finite, all clones on
$A$ are locally closed.
We will denote the symmetric group on $n$ letters
by $S_n$.

\begin{proof}[Proof of Theorem~\ref{main-thm}]
Let $\al A=(A;\cl{D})$.
We may assume without loss of generality that
$A=\{1,2,\ldots,k\}$.
The discriminator function preserves all bijections
between any two subsets of $A$ of the same size.
Therefore 
\begin{enumerate}
\item[(1)]
all subsets of $A$ are subalgebras of $\al A$, and
\item[(2)]
$\Iso(\al A)$ is the set of all bijections 
$B\to C$ such that $B,C\subseteq A$ and $|B|=|C|$.
\end{enumerate}
Hence for each $\vect{a}=(a_1,\ldots,a_n)\in A^n$
\begin{enumerate}
\item[(3)]
$\al{S}^\cl{D}_\vect{a}$ is the set of coordinates of
$\vect{a}$, and
\item[(4)]
the  $\Iso(\al A)$-orbit ($\sim_\cl{D}$-block)
of $\vect{a}$ is
\[
\vect{a}/{\sim_\cl{D}}=\{(b_1,\ldots,b_n)\in A^n:
\text{$b_i=b_j$ $\Leftrightarrow$ $a_i=a_j$
holds for all $i,j$}\}.
\]
\end{enumerate}
The number of distinct coordinates of
$\vect{a}$ will be called the \emph{breadth 
of $\vect{a}$}.
It follows from (4) that all tuples in a 
$\sim_\cl{D}$-block
$P=\vect{a}/{\sim_\cl{D}}$ have the same breadth; 
this number will be called the \emph{breadth of $P$}, 
and will be denoted by $\nu(P)$.
Another consequence of (4) is that
\begin{enumerate}
\item[(5)]
every $\sim_\cl{D}$-block $P$ of breadth $r$ in $A^n$
can be represented by a tuple $\vect{c}=(c_1,\ldots,c_n)$
such that $\{c_1,\ldots,c_n\}=\{1,\ldots,r\}$;
\item[(6)]
moreover, this representative $\vect{c}$ is unique
if we require in addition that the first occurrences of
$1,\ldots,r$ among $c_1,\ldots,c_n$ appear in increasing order; that is, 
if the first
occurrence of $i$ ($1\le i\le r$) in $(c_1,\ldots,c_n)$
is $c_{j_i}$ for each 
$i$, then $j_1<j_2<\cdots<j_r$.
\end{enumerate}
Thus the $n$-tuples $\vect{c}$ that satisfy the conditions
described in (5)--(6) form a transversal for
the $\sim_\cl{D}$-blocks of $A^n$. We will select this transversal to be $T_n$.

Let $\vect{c}\in T_n$. With the notation used in (5)--(6)
we get from (4) that the projection mapping
\[
\pi_P\colon P\to P_r,
\quad
(a_1,\ldots,a_n)\mapsto(a_{j_1},\ldots,a_{j_r})
\]
whose range $P_r$ is the unique 
$\sim_\cl{D}$-block of breadth $r$ in $A^r$ 
is bijective, and maps $\vect{c}$ to 
the $r$-tuple $\vec{r}=(1,\ldots,r)\in T_r$.

For a permutation $\sigma\in S_r$ the bijection
$P_r\to P_r, (x_1,\ldots,x_r)\mapsto
(x_{\sigma(1)},\ldots,x_{\sigma(r)})$ 
that permutes the coordinates of $P_r$ by $\sigma$
will be denoted by $\sigma^*$.

\begin{clm}
\label{proj-fg}
Let $f\in\cl{O}_A^{(n)}$ and $g\in\cl{O}_A^{(m)}$.
If for every $r$ $(1\le r\le k)$ and
for every $\sim_\cl{D}$-block $P$ of breadth $r$ in $A^n$
there exists a $\sim_\cl{D}$-block $Q$ of breadth $r$ in
$A^m$ such that 
\begin{equation}
\label{eq-proj-fg}
f|_P\circ\pi_P^{-1}=
(g|_Q\circ\pi_Q^{-1})\circ\sigma^*
\quad
\text{for some $\sigma\in S_r$,}
\end{equation}
then $f\subf[\cl{D}]g$.
\end{clm}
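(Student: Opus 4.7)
The plan is to derive the conclusion directly from Theorem~\ref{le_C}, so I need to show that for each $\sim_\cl{D}$-block $P=\vect{c}/{\sim_\cl{D}}$ with $\vect{c}\in T_n$ there is a $\sim_\cl{D}$-block $Q=\vect{d}/{\sim_\cl{D}}$ in $A^m$ with $\al{S}^\cl{D}_\vect{d}\le\al{S}^\cl{D}_\vect{c}$ and $f|_P=g|_Q\circ\Phi^\cl{D}_{\vect{d},\vect{c}}$. Fix $P$ of breadth $r$; the hypothesis supplies a $\sim_\cl{D}$-block $Q$ of breadth $r$ in $A^m$ and a permutation $\sigma\in S_r$ satisfying \eqref{eq-proj-fg}. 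Let $\vect{d}=(d_1,\ldots,d_m)$ be the canonical representative of $Q$ in $T_m$ (so the set of its coordinates is $\{1,\ldots,r\}$), and set $\vect{d}':=(\sigma(d_1),\ldots,\sigma(d_m))$. Since $d_s=d_t$ iff $\sigma(d_s)=\sigma(d_t)$, the tuple $\vect{d}'$ still belongs to $Q$, and its set of coordinates is $\{\sigma(1),\ldots,\sigma(r)\}=\{1,\ldots,r\}$; consequently $\al{S}^\cl{D}_{\vect{d}'}=\{1,\ldots,r\}=\al{S}^\cl{D}_\vect{c}$ by property~(3).

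The heart of the argument is the identification
\[
\Phi^\cl{D}_{\vect{d}',\vect{c}}=\pi_Q^{-1}\circ\sigma^*\circ\pi_P
\qquad\text{as functions $P\to Q$.}
\]
Both sides are maps from $P$ to $Q$; I plan to invoke the uniqueness characterization of $\Phi^\cl{D}_{\vect{d}',\vect{c}}$ as the only map $P\to Q$ that sends $\vect{c}$ to $\vect{d}'$ and commutes with every $\iota\in\Iso(\al A)$. For the value at $\vect{c}$: $\pi_P(\vect{c})=\vec{r}=(1,\ldots,r)$, so $\sigma^*(\pi_P(\vect{c}))=(\sigma(1),\ldots,\sigma(r))$, and unwinding $\pi_Q^{-1}$ using the defining property $\pi_Q(\vect{d})=\vec{r}$ yields $\pi_Q^{-1}(\sigma(1),\ldots,\sigma(r))=(\sigma(d_1),\ldots,\sigma(d_m))=\vect{d}'$. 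Commutation with internal isomorphisms is immediate since $\pi_P$ and $\pi_Q$ merely extract coordinates, and $\sigma^*$ permutes coordinates, so each of the three factors commutes coordinatewise with the pointwise action of any $\iota$.

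Granting this identification, precomposing \eqref{eq-proj-fg} with $\pi_P$ on the right gives
\[
f|_P=g|_Q\circ\pi_Q^{-1}\circ\sigma^*\circ\pi_P=g|_Q\circ\Phi^\cl{D}_{\vect{d}',\vect{c}},
\]
which together with $\al{S}^\cl{D}_{\vect{d}'}\le\al{S}^\cl{D}_\vect{c}$ is exactly condition~(b) of Theorem~\ref{le_C}. Hence $f\subf[\cl{D}]g$.

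The only step that requires genuine care is the identification of $\pi_Q^{-1}\circ\sigma^*\circ\pi_P$ with $\Phi^\cl{D}_{\vect{d}',\vect{c}}$; everything else is routine bookkeeping. Because $\Phi^\cl{D}_{\vect{d}',\vect{c}}$ is characterized by its value at a single representative together with equivariance under $\Iso(\al A)$, this identification reduces to one computation of $\vect{d}'$ and an inspection of how coordinate projections and coordinate permutations interact with internal isomorphisms, so I do not expect real difficulty there.
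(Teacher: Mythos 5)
Your proposal is correct and follows essentially the same route as the paper: both reduce to condition (b) of Theorem~\ref{le_C} by identifying $\pi_Q^{-1}\circ\sigma^*\circ\pi_P$ with $\Phi^\cl{D}_{\vect{d}',\vect{c}}$ via its uniqueness characterization (value at $\vect{c}$ plus equivariance under $\Iso(\al A)$), where your $\vect{d}'$ is exactly the paper's $\sigma(\vect{c}')$. The only cosmetic difference is that you verify $\pi_Q^{-1}\bigl((\sigma(1),\ldots,\sigma(r))\bigr)=\vect{d}'$ by a direct coordinate check rather than by invoking equivariance of $\pi_Q^{-1}$ under the internal isomorphism $\sigma$.
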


Suppose that the hypotheses of the claim
are satisfied. To prove that $f\subf[\cl{D}]g$
if suffices to verify that condition (b) in
Theorem~\ref{le_C} with $\cl{C}=\cl{D}$ holds.
Let
$P=\vect{c}/{\sim_\cl{D}}$ ($\vect{c}\in T_n$) be an 
arbitrary $\sim_\cl{D}$-block 
of breadth $r$ in $A^n$, and let
$Q=\vect{c}'/{\sim_\cl{D}}$ ($\vect{c}'\in T_m$) be a
$\sim_\cl{D}$-block in $A^m$ for which 
(\ref{eq-proj-fg}) holds.
Furthermore, let $\vec{r}=(1,\ldots,r)$,
and let $\vect{d}=\sigma(\vect{c}')$
be the image of $\vect{c}'$ under the
internal isomorphism $\sigma$ of $\al A$.

Notice that each one of the mappings $\pi_P$, $\pi_Q$,
and $\sigma^*$ are bijections between $\sim_\cl{D}$-blocks, and preserve the internal isomorphisms of $\al A$. Therefore the mapping 
$\pi_Q^{-1}\circ\sigma^*\circ\pi_P\colon P\to Q$
also preserves the internal isomorphisms of $\al A$.
The image of $\vect{c}$ under this mapping is
$\vect{d}$, as the following calculation shows:
\[
\pi_Q^{-1}\bigl(\sigma^*(\pi_P(\vect{c}))\bigr)=
\pi_Q^{-1}\bigl(\sigma^*(\vec{r})\bigr)=
\pi_Q^{-1}\bigl((\sigma(1),\ldots,\sigma(r))\bigr)=
\pi_Q^{-1}\bigl(\sigma(\vec{r})\bigr)=
\sigma(\vect{c}')=\vect{d},
\]
where the second to last equality holds, because
$\pi_Q^{-1}(\vec{r})=\vect{c}'$ and
$\sigma$ is an internal isomorphism of $\al A$.
Since $\Phi^\cl{D}_{\vect{d},\vect{c}}$
is the unique mapping $P\to Q$ that preserves the internal
isomorphisms of $\al A$ and sends $\vect{c}$ to
$\vect{d}$, we get that 
$\pi_Q^{-1}\circ\sigma^*\circ\pi_P=
\Phi^\cl{D}_{\vect{d},\vect{c}}$.
Thus the equality in (\ref{eq-proj-fg}) is equivalent to
$f|_P=
g|_Q\circ\pi_Q^{-1}\circ\sigma^*\circ\pi_P
=g|_Q\circ \Phi^\cl{D}_{\vect{d},\vect{c}}$.
The $m$-tuple $\vect{d}= \sigma(\vect{c}')$
clearly satisfies $Q=\vect{d}/{\sim_\cl{D}}$ and 
$\al{S}^\cl{D}_{\vect{d}}=\{1,\ldots,r\}=
\al{S}^\cl{D}_{\vect{c}}$ (see statement (3) above).
This shows that condition (b) in
Theorem~\ref{le_C} with $\cl{C}=\cl{D}$ holds,
and hence completes the proof of
Claim~\ref{proj-fg}.

\medskip

In Claim~\ref{proj-fg} $f|_P\circ\pi_P^{-1}$ and 
$g|_Q\circ\pi_Q^{-1}$ are both functions $P_r\to A$,
and condition (\ref{eq-proj-fg}) says that,
up to a permutation of the coordinates of $P_r$,
they are the same function.
For arbitrary functions $\phi,\psi\colon P_r\to A$
let
\[
\phi\approx\psi
\quad\Leftrightarrow\quad
\phi=\psi\circ\sigma^*
\text{ for some }\sigma\in S_r.
\]
In other words, $\phi\approx\psi$ iff
$\phi$ and $\psi$ are in the same orbit
under the action of the symmetric group $S_r$ 
on the set $A^{P_r}$ of all functions $P_r\to A$ by permuting the coordinates of $P_r$.
Hence $\approx$ is an equivalence relation on
$A^{P_r}$.
With this notation
condition (\ref{eq-proj-fg}) above 
can be restated to say
that  
$f|_P\circ\pi_P^{-1}$ and
$g|_Q\circ\pi_Q^{-1}$ are 
in the same $\approx$-block of $A^{P_r}$.

For arbitrary $n$-ary operation $f$ on $A$ ($n\ge1$)
and integer $r$ ($1\le r\le k$) let
$\EE_r(f)$ denote the set of $\approx$-blocks of
all functions $f|_P\circ\pi_P^{-1}$
as $P$ runs over all $\sim_\cl{D}$-blocks of breadth $r$ in $A^n$.

\begin{clm}
\label{main-clm2}
Let $f\in\cl{O}_A^{(n)}$ and $g\in\cl{O}_A^{(m)}$.
\begin{enumerate}
\item[{\rm(i)}]
If $\EE_r(f)\subseteq\EE_r(g)$ for all $r$ $(1\le r\le k)$,
then $f\subf[\cl{D}]g$.
\item[{\rm(ii)}]
If $\EE_r(f)=\EE_r(g)$ for all $r$ $(1\le r\le k)$,
then $f\fequiv[\cl{D}]g$.
\end{enumerate}
\end{clm}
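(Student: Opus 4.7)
The plan is to obtain both parts of Claim~\ref{main-clm2} as essentially immediate consequences of Claim~\ref{proj-fg}, by translating the set-theoretic containment $\EE_r(f)\subseteq\EE_r(g)$ into the hypothesis (\ref{eq-proj-fg}) of that claim.

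For part (i), I would argue as follows. To apply Claim~\ref{proj-fg} I need to produce, for every breadth $r$ with $1\le r\le k$ and every $\sim_\cl{D}$-block $P$ of breadth $r$ in $A^n$, a $\sim_\cl{D}$-block $Q$ of breadth $r$ in $A^m$ and a permutation $\sigma\in S_r$ satisfying $f|_P\circ\pi_P^{-1}=(g|_Q\circ\pi_Q^{-1})\circ\sigma^*$. Now, the $\approx$-block of $f|_P\circ\pi_P^{-1}$ in $A^{P_r}$ lies in $\EE_r(f)$ by the very definition of $\EE_r(f)$, and hence, by the hypothesis $\EE_r(f)\subseteq\EE_r(g)$, it also lies in $\EE_r(g)$. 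By the definition of $\EE_r(g)$, this means there exists some $\sim_\cl{D}$-block $Q$ of breadth $r$ in $A^m$ such that $g|_Q\circ\pi_Q^{-1}$ is $\approx$-equivalent to $f|_P\circ\pi_P^{-1}$. Unpacking the definition of $\approx$ then yields a $\sigma\in S_r$ with $f|_P\circ\pi_P^{-1}=(g|_Q\circ\pi_Q^{-1})\circ\sigma^*$, which is exactly (\ref{eq-proj-fg}). Claim~\ref{proj-fg} then delivers $f\subf[\cl{D}]g$.

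Part (ii) is then immediate from part (i) applied symmetrically: if $\EE_r(f)=\EE_r(g)$ for all $r$, then both containments $\EE_r(f)\subseteq\EE_r(g)$ and $\EE_r(g)\subseteq\EE_r(f)$ hold, giving $f\subf[\cl{D}]g$ and $g\subf[\cl{D}]f$, and hence $f\fequiv[\cl{D}]g$ by the definition of $\cl{D}$-equivalence.

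There is no real obstacle; all of the nontrivial content (the reduction to a condition about projected restrictions, and the handling of the symmetry group $S_r$ acting on $P_r$) has already been absorbed into Claim~\ref{proj-fg} and into the definition of $\EE_r(\cdot)$. The only thing worth verifying carefully while writing it out is that the arities $n$ and $m$ of $f$ and $g$ play no role in the argument beyond indexing where the blocks $P$ and $Q$ live, so that the quantification over all $r$ and all $P$ of breadth $r$ really does exhaust the hypotheses needed by Claim~\ref{proj-fg}.
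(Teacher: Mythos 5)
Your proposal is correct and follows essentially the same route as the paper: the paper disposes of part (i) by observing that it is just Claim~\ref{proj-fg} restated in the notation of $\approx$ and $\EE_r$ (the reference to Claim~\ref{main-clm} in the paper's proof is evidently a slip for Claim~\ref{proj-fg}), and obtains (ii) immediately from (i), exactly as you do. Your version merely spells out the unwinding of the definitions of $\EE_r$ and $\approx$ that the paper leaves implicit.
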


Part (i) is a restatement of Claim~\ref{main-clm}
using the notation introduced after Claim~\ref{main-clm}.
Part (ii) is an immediate consequence of (i).

\medskip

Now let $N(k,r)$ denote the index of $\approx$ 
(i.e., the number of $\approx$-blocks) in $A^{P_r}$,
where $k=|A|$.
We will also use the Stirling numbers 
$S(d,r)$ of the second kind.
Since the $\sim_\cl{D}$-blocks of breadth $r$ in $A^d$
are in one-to-one correspondence with the
partitions of $\{1,\ldots,d\}$ into $r$ blocks,
$S(d,r)$ is the number of
$\sim_\cl{D}$-blocks of breadth $r$ in $A^d$.

\begin{clm}
\label{main-clm}
If $d$ is a positive integer such that
\begin{equation}
\label{eq-d}
N(k,r)\le S(d,r)
\quad
\text{for all $r$ with $2\le r\le k$,}
\end{equation}
then every operation $f\in\cl{O}_A$ is $\cl{D}$-equivalent
to a $d$-ary operation.
\end{clm}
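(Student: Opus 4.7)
The plan is to construct, for every $f\in\cl{O}_A$, a $d$-ary operation $g$ with $\EE_r(g)=\EE_r(f)$ for all $r$ with $1\le r\le k$, and then invoke Claim~\ref{main-clm2}(ii) to deduce $f\fequiv[\cl{D}]g$.

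Let $f$ be $n$-ary. If $n\le d$, then the fill-in of $f$ to arity $d$ by fictitious variables (as in the proof of Proposition~\ref{basic_props2}(i)) is $\cl{D}$-equivalent to $f$, since that argument uses only projections, all of which lie in $\cl{D}$; so we are done in this case. We may therefore assume $n>d$. Observe that the hypothesis (\ref{eq-d}) applied at $r=k$ forces $d\ge k$: indeed $N(k,k)\ge1$ while $S(d,k)=0$ for $d<k$. Hence $n>d\ge k$, and consequently $\EE_r(f)\ne\emptyset$ for every $r$ with $1\le r\le k$.

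Next I define $g\colon A^d\to A$ block-by-block on the $\sim_\cl{D}$-partition of $A^d$. Fix $r$ with $1\le r\le k$. For $r=1$, each of $A^n$ and $A^d$ contains a unique $\sim_\cl{D}$-block of breadth $1$ (the diagonal), so $|\EE_1(f)|=1=S(d,1)$. For $r\ge 2$ we have $|\EE_r(f)|\le N(k,r)\le S(d,r)$ by (\ref{eq-d}). Since $S(d,r)$ is exactly the number of $\sim_\cl{D}$-blocks of breadth $r$ in $A^d$, I can choose a surjection $Q\mapsto E_Q$ from these blocks onto $\EE_r(f)$; for each such $Q$, pick a representative $\psi_Q\colon P_r\to A$ of the $\approx$-class $E_Q$, and set $g|_Q:=\psi_Q\circ\pi_Q$.

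Since the $\sim_\cl{D}$-blocks partition $A^d$, this determines $g$ completely. By construction, each restriction $g|_Q\circ\pi_Q^{-1}=\psi_Q$ lies in an $\approx$-class belonging to $\EE_r(f)$, so $\EE_r(g)\subseteq\EE_r(f)$; and the surjectivity of $Q\mapsto E_Q$ gives $\EE_r(f)\subseteq\EE_r(g)$. Thus $\EE_r(f)=\EE_r(g)$ for every $r$ with $1\le r\le k$, and Claim~\ref{main-clm2}(ii) yields $f\fequiv[\cl{D}]g$. The heart of the argument is a pigeonhole-style assignment powered by the counting inequality (\ref{eq-d}); I do not expect a real obstacle beyond separating off the trivial case $n\le d$ and checking that the $r=1$ case is automatically compatible with the construction.
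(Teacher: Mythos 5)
Your proof is correct and follows essentially the same route as the paper: you build a $d$-ary $g$ block-by-block on the $\sim_\cl{D}$-partition of $A^d$ by assigning to each block of breadth $r$ a representative of an $\approx$-class in $\EE_r(f)$, using the inequality $|\EE_r(f)|\le N(k,r)\le S(d,r)$ to make the assignment surjective, and then invoke Claim~\ref{main-clm2}(ii). In fact you are slightly more careful than the paper's own argument, which tacitly assumes $\EE_r(f)\ne\emptyset$ for all $r\le k$ (i.e.\ $n\ge k$); your preliminary reduction to $n>d\ge k$ via fictitious variables closes that small gap.
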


Assume that (\ref{eq-d})
holds for $d$, and let $f$ be
an arbitrary operation on $A$,
say $f$ is $n$-ary. In view of Claim~\ref{main-clm2}~(ii)
it suffices to show that there exists a $d$-ary operation
$g$ on $A$ such that $\EE_r(g)=\EE_r(f)$ for all
$r$ ($1\le r\le k$).
Since the $\sim_\cl{D}$-blocks partition $A^d$, we may
define $g$ on each $\sim_\cl{D}$-block separately.

For the unique $\sim_\cl{D}$-block 
$Q=(1,\ldots,1)/{\sim_\cl{D}}$ of breadth $r=1$ in $A^d$
we define $g|_Q$ to be $f|_P\circ\pi_P^{-1}\circ\pi_Q$
where $P=(1,\ldots,1)/{\sim_\cl{D}}$ is the unique
$\sim_\cl{D}$-block of breadth $1$ in $A^n$.
This will ensure that $\EE_r(f)=\EE_r(g)$ holds for $r=1$.

If $2\le r\le k$, then $|\EE_r(f)|\le N(k,r)\le S(d,r)$, 
where the first inequality follows from the definition of
$\EE_r(f)$, while the second equality is our assumption.
Let $\phi_1,\ldots,\phi_s$ be a transversal
for the $\approx$-blocks in $\EE_r(f)$.
The inequality $s=|\EE_r(f)|\le S(d,r)$ ensures that
we can select $s$ distinct 
$\sim_\cl{D}$-blocks $Q_j$ ($j=1,\ldots,s$) of breadth
$r$ in $A^d$. 
Now for each $\sim_\cl{D}$-block 
$Q$ of breadth $r$ in $A^d$
we define $g|_Q=\phi_j\circ\pi_Q$ if $Q=Q_j$
($j=1,\ldots,s$), and  $g|_Q=\phi_1\circ\pi_Q$ 
otherwise.
Clearly, this will imply that $\EE_r(f)=\EE_r(g)$ holds for $r\ge2$. 
This completes the proof of the claim.

\medskip

To finish the proof of Theorem~\ref{main-thm} it remains to
show that (\ref{eq-d}) holds for $d=k^k-k^{k-1}+1$.

\begin{clm}
\label{imeq-d}
Condition (\ref{eq-d}) holds for $d=k^k-k^{k-1}+1$.
\end{clm}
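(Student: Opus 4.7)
The plan is to upper-bound $N(k,r)$ via Burnside's lemma, lower-bound $S(d,r)$ by elementary inclusion--exclusion, and then reduce to a single inequality which we verify separately in the tight case $k=2$ and the slack case $k\ge 3$.

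First I would observe that $S_r$ acts freely on $P_r$: for any nonidentity $\sigma\in S_r$ and any $\vect{x}\in P_r$, the equalities $x_i=x_{\sigma(i)}$ cannot all hold since the coordinates of $\vect{x}$ are distinct. Applying this to every nontrivial power of $\sigma$ shows that $\langle\sigma^*\rangle$ acts freely on $P_r$, so $\sigma^*$ has exactly $|P_r|/|\sigma|$ orbits on $P_r$. A function in $A^{P_r}$ is fixed by $\sigma^*$ iff it is constant on these orbits, contributing $k^{|P_r|/|\sigma|}$. Burnside's lemma then yields
\[
r!\,N(k,r)\;=\;\sum_{\sigma\in S_r}k^{|P_r|/|\sigma|}\;\le\;k^{|P_r|}+(r!-1)\,k^{|P_r|/2},
\]
using $|\sigma|\ge 2$ for $\sigma\ne\mathrm{id}$.

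Second, since $r!\,S(d,r)$ counts the surjections $[d]\to[r]$ and at most $r(r-1)^d$ functions omit a given element of $[r]$, we get $r!\,S(d,r)\ge r^d-r(r-1)^d$. It therefore suffices to prove
\[
r^d-r(r-1)^d\;\ge\;k^{|P_r|}+(r!-1)\,k^{|P_r|/2}\qquad(2\le r\le k),
\]
where $|P_r|=k!/(k-r)!$ and $d=k^k-k^{k-1}+1$.

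For $k=2$ the only relevant value is $r=2$, and a direct computation shows both sides equal $6$. For $k\ge 3$, bound the right-hand side by $r!\,k^{k!}$ using $|P_r|\le k!$, and bound the left-hand side from below by $r^d/2$ via $(1-1/r)^d\le e^{-d/r}\le e^{-k^{k-2}}$, which dwarfs $1/(2r)$. The inequality thus reduces to $r^d\ge 2\,r!\,k^{k!}$; taking logarithms and using the most restrictive $r=2$ (smallest $\log r$), this becomes $d\ge 2+k!\log_2 k$, and this is comfortably implied by $d=k^{k-1}(k-1)+1$ because $k^{k-2}/(k-1)!$ grows super-exponentially in $k$ while $\log_2 k$ grows only logarithmically.

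The principal obstacle is the tight case $k=2$, where equality holds throughout and no slack is available, forcing the choice of the constant $d=k^k-k^{k-1}+1$ to be exactly right on small domains; for $k\ge 3$ the estimates have large margins and the argument is essentially routine bookkeeping.
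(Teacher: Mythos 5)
Your proof is correct, but it reaches Claim~\ref{imeq-d} by a genuinely different route from the paper. The paper uses the crudest possible bounds: $N(k,r)\le k^{|P_r|}\le k^{k!}$ (simply the total number of functions $P_r\to A$, ignoring the $S_r$-action) and $S(d,r)\ge r^{d-r}$ (via an explicit family of extensions of the identity map $\{1,\dots,r\}\to\{1,\dots,r\}$ with distinct kernels), and then chains $k^{k!}\le 2^{d-k}\le r^{d-r}$ for $k\ge3$; since these crude bounds fail at $k=2$ (there $k^{|P_2|}=4>3=S(3,2)\ge 2=r^{d-r}$), the paper disposes of $k=2$ by directly computing $N(2,2)=S(3,2)=3$. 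You instead sharpen both sides --- Burnside's lemma with the observation that $S_r$ acts freely on $P_r$ gives $r!\,N(k,r)\le k^{|P_r|}+(r!-1)k^{|P_r|/2}$, and the union bound on surjections gives $r!\,S(d,r)\ge r^d-r(r-1)^d$ --- and these refined bounds are exactly tight at $k=2,r=2$ (both sides equal $6$), so the two cases are handled uniformly and your argument explains \emph{why} $k=2$ is the tight case. For $k\ge3$ your estimates collapse back to essentially the same comparison $r^d\gtrsim r!\,k^{k!}$ as the paper's, with ample slack. Two small points you should tighten: the assertion that $r=2$ is ``the most restrictive'' value is not immediate, since the term $\log_2 r!$ in the numerator also grows with $r$; it is rescued by noting $\log_2 r!/\log_2 r\le r\le k$, which is negligible against $d=k^{k-1}(k-1)+1$. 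And the final inequality $d\ge 2+k!\log_2 k$ should be checked explicitly at $k=3$ (where $19\ge 2+6\log_2 3\approx 11.5$), since that is where your margins are smallest. Neither issue is a genuine gap.
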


If $k=2$, then $d=3$, and
the only value of $r$ to be considered is
$r=2$. 
It is straightforward to check that in this case
$N(k,r)=N(2,2)=3$ and 
$S(d,r)=S(3,2)=3$. Therefore
(\ref{eq-d}) holds for $k=2$.

From now on we will assume that $k \ge 3$. 
Let $2\le r\le k$.
We have $d > r$, because 
$d = k^{k-1}(k-1)+1 > (k-1)+1=k$.
The number of
equivalence relations on $\{1, 2, \ldots, d\}$ with 
exactly $r$ blocks is at least $r^{d - r}$, 
since the identity function 
$\{1, 2, \ldots, r\} \to \{1, 2, \ldots, r\}$ 
can be extended in $r^{d - r}$ different ways 
to a function 
$\{1, 2, \ldots, d\} \to \{1, 2, \ldots, r\}$ and these extensions have distinct kernels, which are equivalence relations on $\{1, 2, \ldots, d\}$ with exactly $r$ blocks. Thus, 
\[
r^{d - r}\le S(d,r).
\]

The number of functions $P_r \to A$ is
$k^{k(k-1)\cdots(k-r+1)}$, 
therefore 
\[
N(k,r) \leq k^{k(k-1)\cdots(k-r+1)}\le k^{k!}.
\] 
Since $k \geq 3$, we have $k! < k^{k-1}$ and 
$k \leq 2^{k-1}$. 
Thus we get the first inequality in
\[
k^{k!} \leq (2^{k-1})^{k^{k-1} - 1} = 
2^{k^k - k^{k-1} + 1 - k} =
2^{d-k}\leq 
r^{d - r}.
\]
The last inequality, 
$2^{d - k} \leq r^{d - r}$, 
follows from $2 \leq r \leq k$. 
Combining the displayed inequalities 
we get that $N(k,r)\le k^{k!}\le r^{d-r}\le S(d,r)$. 

This completes the proof of Theorem~\ref{main-thm}.
\end{proof}

Theorem~\ref{main-thm} shows that $\cl{D}$ belongs to
the filter $\FF_A$ of all clones $\cl{C}$ on $A$ for
which there are only finitely many $\cl{C}$-equivalence
classes of operations on $A$.
The next theorem will prove that if $|A|=2$, then
all members of $\FF_A$ are discriminator clones.
Hence in this case $\FF_A$ is a principal filter
in the lattice of clones on $A$ with least element
$\cl{D}$.

\begin{theorem}
\label{boolean-case}
For a two-element set $A$, if $\cl{C}$ is not 
a discriminator clone on $A$, then
$\fequiv[\cl{C}]$ has infinite index in $\cl{O}_A$.
\end{theorem}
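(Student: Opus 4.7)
The plan is to invoke Post's classification of Boolean clones together with the filter property of $\FF_A$ from Proposition~\ref{basic_props2}(ii). Since $\FF_A$ is upward closed in the clone lattice, its complement is downward closed, so it suffices to exhibit, for each $\subseteq$-maximal clone $\cl{C}$ on $\{0,1\}$ that does not contain $t$, an infinite family of pairwise $\cl{C}$-inequivalent operations. First I would verify that the discriminator $t$ is neither monotone (since $t(0,0,1)=1$ while $t(0,1,1)=0$) nor affine (its Zhegalkin polynomial over $\FFF_2$ has a degree-$2$ term), so both Post-maximal clones $M$ (monotone) and $L$ (affine) are non-discriminator clones. Using Post's lattice one then enumerates the finitely many maximal non-discriminator clones.

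For $\cl{C}=L$ I would use the Zhegalkin polynomial degree as an invariant of $\fequiv[L]$. An affine substitution $f=g\circ(\ell_1,\ldots,\ell_m)$ replaces each degree-$k$ monomial of $g$ by a product of $k$ affine polynomials, so $\deg f\le\deg g$; hence $f\fequiv[L]g$ implies $\deg f=\deg g$. The conjunctions $x_1\wedge\cdots\wedge x_n$ have degree $n$, so they form an infinite family of pairwise $L$-inequivalent operations.

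For $\cl{C}=M$ I would use the parity functions $p_n(x_1,\ldots,x_n)=x_1\oplus\cdots\oplus x_n$. Fix the maximal chain $\vect{c}_0<\vect{c}_1<\cdots<\vect{c}_n$ in $\{0,1\}^n$ with $\vect{c}_i$ having $1$ in its first $i$ coordinates; along this chain $p_n(\vect{c}_i)=i\bmod 2$ alternates. Any monotone $h\colon\{0,1\}^n\to\{0,1\}$ is non-decreasing on the chain, so its values there form a pattern $0,\ldots,0,1,\ldots,1$ determined by a single transition index $\tau(h)\in\{0,1,\ldots,n,\infty\}$. If $p_n=p_m(h_1,\ldots,h_m)$ with monotone $h_j$ and $\tau_j=\tau(h_j)$, the identity $p_n(\vect{c}_i)=\bigoplus_j [\tau_j\le i]$ forces the parity of $\card{\{j:\tau_j\le i\}}$ to flip between consecutive values of $i$. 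Hence $\card{\{j:\tau_j=i\}}$ is odd, and in particular non-zero, for every $i\in\{1,\ldots,n\}$, so $m\ge n$. By symmetry $p_n\fequiv[M]p_m$ iff $n=m$.

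The main obstacle I anticipate is the Post-lattice bookkeeping in the opening step: beyond $M$ and $L$ there can be further $\subseteq$-maximal non-discriminator clones (for instance $\Pol(R)$ for certain binary relations $R$ on $\{0,1\}$ such as $R=\{(0,1),(1,0),(1,1)\}$), and each requires its own invariant. For each such clone I expect a combinatorial invariant analogous to the degree or transition-count---typically applied to a suitable family of iterated conjunctions, disjunctions, or parities, together with a pairwise-zero or pairwise-one covering argument---to provide the required infinite family of pairwise inequivalent operations.
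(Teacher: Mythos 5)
Your overall strategy coincides with the paper's: reduce via Post's lattice to the maximal clones not containing $t$ and exhibit an infinite $\fequiv[\cl{C}]$-antichain for each. Your arguments for $\cl{L}$ and $\cl{M}$ are sound; the monotone case is essentially the paper's chain argument in different clothing, and the degree-of-Zhegalkin-polynomial argument for $\cl{L}$ is actually more self-contained than the paper, which at that point just cites an external result. The problem is the step you defer. The maximal non-discriminator clones on $\{0,1\}$ are not only $\cl{M}$ and $\cl{L}$: one must also handle $\cl{R}_0=\Pol\bigl(\{(0,0),(0,1),(1,0)\}\bigr)$ and $\cl{R}_1=\Pol\bigl(\{(1,1),(1,0),(0,1)\}\bigr)$, and neither is contained in $\cl{M}$ or $\cl{L}$ (for instance $x\bar y$ preserves $\rho_0$ but is neither monotone nor affine). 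You correctly suspect their existence and even name $\rho_1$, but for these clones you offer only the expectation that ``a combinatorial invariant'' exists. That is the substantive half of the case analysis left unproved: without it, the theorem is not established for the subclones of $\cl{R}_0$ and $\cl{R}_1$ lying outside $\cl{M}\cup\cl{L}$, so the proof is genuinely incomplete rather than merely sketchy.

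For the record, the paper closes this case with the same parity functions $f_n=x_1+\cdots+x_n$. If $f_m=f_n\circ\vect{h}$ with every coordinate of $\vect{h}$ preserving $\rho_0$, then $\vect{h}(\vect{0})=\vect{0}$ (since $(1,1)\notin\rho_0$), each image $\vect{h}(\vect{e}_i)$ of a unit vector is nonzero (because $f_n(\vect{h}(\vect{e}_i))=f_m(\vect{e}_i)\neq f_m(\vect{0})$), and distinct images are coordinatewise $\rho_0$-related, so no column of the $m\times n$ matrix with rows $\vect{h}(\vect{e}_i)$ contains two $1$'s while every row contains at least one; hence $m\le n$. Some such concrete argument is needed to complete your proposal.
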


\begin{proof}
We may assume without loss of generality that $A=\{0,1\}$.
The lattice of all clones on $\{0,1\}$ was described by
Post~\cite{Post}.
By inspecting Post's lattice one can see that if $\cl{C}$
is not a discriminator clone, then $\cl{C}$
is a subclone of one of the following clones:
\begin{itemize}
\item
the clone $\cl{L}$ of linear operations,
\item
the clone $\cl{M}$ of all operations that are 
monotone with respect to the partial order $0\le 1$,
\item
the clone $\cl{R}_0$ of all operations that preserve the binary relation $\rho_0=\{(0,0),\ (0,1),\ (1,0)\}$, and
\item
the clone $\cl{R}_1$ of all operations that preserve the binary relation $\rho_1=\{(1,1),\ (1,0),\ (0,1)\}$.
\end{itemize}
In view of Lemma~\ref{basic_props2}~(ii), to show that
$\fequiv[\cl{C}]$ has infinite index in $\cl{O}_A$
it suffices to verify that each one of the four 
equivalence relations 
$\fequiv[\cl{L}]$, $\fequiv[\cl{M}]$, and $\fequiv[\cl{R}_i]$ ($i=0,1$)
has infinite index in $\cl{O}_A$.
This will be done in the 
Claims~\ref{restr-lin}--\ref{restr-cross} below.

\medskip

\begin{clm}
\label{restr-lin}
The equivalence relation $\fequiv[\cl{L}]$ has infinite index in $\cl{O}_A$.
\end{clm}

It follows from a result in \cite[Proposition 5.9]{ULM} 
that if $\cl{L}$ is the clone of all linear operations
on $A=\{0,1\}$, then there exists an infinite sequence
of operations $g_n\in\cl{O}_A$ ($n=1,2,\ldots$)
such that $g_m\not\subf[\cl{L}] g_n$ whenever $m\not=n$.
This implies that the equivalence relation 
$\fequiv[\cl{L}]$ has infinite index in $\cl{O}_A$.

\begin{clm}
\label{restr-mon}
The equivalence relation $\fequiv[\cl{M}]$ has infinite index in $\cl{O}_A$.
\end{clm}

For $n \geq 1$ let $f_n$ be the $n$-ary linear operation 
$f_n(x_1,x_2\ldots,x_n) = x_1 + x_2 + \dots + x_n$
on $A=\{0,1\}$. 
Our claim will follow if we show that the operations
$f_n$ are in pairwise distinct  $\fequiv[\cl{M}]$-blocks.
To this end it will be sufficient to verify that
$f_m\subf[\cl{M}] f_n$ if and only if $m\le n$.
If $m\le n$, then 
$f_m = f_n(p_1^{(m)}, p_2^{(m)},\ldots,p_m^{(m)}, 0, \ldots, 0)$, where the projections $p_i^{(m)}$ and the constant function $0$ are members of $\cl{M}$.
Hence $f_m \subf[\cl{M}] f_n$. 

Conversely, assume that $f_m \subf[\cl{M}] f_n$.
By definition, this means that
there exists $\vect{h} \in (\cl{M}^{(m)})^n$ 
such that $f_m = f_n \circ \vect{h}$. 
Consider the chain $\vect{e}_0<\vect{e}_1<\cdots
<\vect{e}_m$ in $(A;\le)^m$ where
$\vect{e}_i=(1, \ldots, 1, 0, \ldots, 0)$
($0 \leq i \leq m$)
is the $m$-tuple whose
first $i$ coordinates are $1$ and 
last $m - i$ coordinates are $0$.
Since $\vect{h} \in (\cl{M}^{(m)})^n$, therefore
$\vect{h}$ is an order preserving mapping of
$(A;\le)^m$ to $(A;\le)^n$.
Thus
$\vect{h}(\vect{e}_0)\le\vect{h}(\vect{e}_1)\le
\cdots\le\vect{h}(\vect{e}_m)$
holds in $(A;\le)^n$.
Moreover, these elements are pairwise distinct, because
the calculation below shows that 
$f_n\bigl(\vect{h}(\vect{e}_i)\bigr)\not=
f_n\bigl(\vect{h}(\vect{e}_{i+1})\bigr)$
for each $i$ ($0 \leq i < m$); indeed,
\[
f_n\bigl(\vect{h}(\vect{e}_i)\bigr) = 
(f_n \circ \vect{h})(\vect{e}_i) = 
f_m(\vect{e}_i) \neq f_m(\vect{e}_{i+1}) = 
(f_n \circ \vect{h})(\vect{e}_{i+1}) = f_n\bigl(\vect{h}(\vect{e}_{i+1})\bigr).
\]
This proves that 
$\vect{h}(\vect{e}_0)<\vect{h}(\vect{e}_1)<
\cdots<\vect{h}(\vect{e}_m)$ 
is a chain of length $m$ 
in $(A;\le)^n$.
In the partially ordered set $(A;\le)^n$
the longest chain has length $n$, therefore 
$m\le n$.

\begin{clm}
\label{restr-cross}
The equivalence relation $\fequiv[\cl{R}_\ell]$
$(\ell=0,1)$ has infinite index in $\cl{O}_A$.
\end{clm}

The clone $\cl{R}_1$ can be obtained from $\cl{R}_0$
by switching the role of the two elements of $A=\{0,1\}$,
therefore it suffices to prove the claim for $\ell=0$. 
As in the preceding claim, we let
$f_n$ ($n\ge1$) be the $n$-ary linear operation 
$f_n(x_1,x_2\ldots,x_n) = x_1 + x_2 + \dots + x_n$
on $A$, and want to prove that
$f_m\subf[\cl{R}_0] f_n$ if and only if $m\le n$.
If $m\le n$, then $f_m \subf[\cl{R_0}] f_n$ follows
the same way as before, 
since the projections and the constant 
function $0$ are members of $\cl{R}_0$.

Now assume that $f_m \subf[\cl{R}_0] f_n$.
By definition, 
there exists $\vect{h}=(h_1,\ldots,h_n) 
\in (\cl{R}_0^{(m)})^n$ 
such that $f_m = f_n \circ \vect{h}$. 
Consider the $m$-tuples
$\vect{e}_i=(0, \ldots, 0,1, 0, \ldots, 0)\in A^m$ 
where the single $1$ occurs
in the $i$-th coordinate ($1 \leq i \leq m$).
Notice that $\vect{e}_i$ and $\vect{e}_j$ are
$\rho_0$-related coordinatewise for all distinct 
$i$ and $j$.
Since $\vect{h}=(h_1,\ldots,h_n) \in (\cl{R}_0^{(m)})^n$, 
the operations $h_1,\ldots,h_n$ preserve $\rho_0$.
Hence the $n$-tuples
$\vect{h}(\vect{e}_i)$ and
$\vect{h}(\vect{e}_j)$ are also
$\rho_0$-related coordinatewise for all 
distinct $i$ and $j$.

We will use the notation $\vect{0}$ for
tuples (of arbitrary length) whose 
coordinates are all $0$.
Since every operation $h_k$ 
($1\le k\le n$) preserves $\rho_0$,
and
$(0,0)\in\rho_0$ but $(1,1)\notin\rho_0$,
we get that $h_k(\vect{0})=0$.
Thus $\vect{h}(\vect{0})=\vect{0}$.
The following calculation shows that
$f_n\bigl(\vect{h}(\vect{e}_i)\bigr)\not=
f_n\bigl(\vect{0}\bigr)$
for each $i$:
\[
f_n\bigl(\vect{h}(\vect{e}_i)\bigr) = 
(f_n \circ \vect{h})(\vect{e}_i) = 
f_m(\vect{e}_i) \neq f_m(\vect{0}) = 
(f_n \circ \vect{h})(\vect{0}) = 
f_n\bigl(\vect{h}(\vect{0})\bigr) =
f_n(\vect{0}).
\]
Thus $\vect{h}(\vect{e}_i)\not=\vect{0}$
for each $i$.
Let $M$ denote the $0$--$1$ matrix
whose rows are the $n$-tuples 
$\vect{h}(\vect{e}_i)$  ($1 \leq i \le m$).
The fact that 
$\vect{h}(\vect{e}_i)$ and
$\vect{h}(\vect{e}_j)$ are 
$\rho_0$-related coordinatewise for all 
distinct $i$ and $j$ 
implies that each column of $M$ has at most
one occurrence of $1$.
The fact that each $\vect{h}(\vect{e}_i)$
is different from $\vect{0}$
implies that every row of $M$ has at least
one occurrence of $1$.
Since $M$ is $m\times n$, we get that $m\le n$.

This completes the proof of Theorem \ref{boolean-case}.
\end{proof}

As we mentioned in the introduction, 
the statement in Theorem~\ref{boolean-case}
fails for clones on a finite set $A$ with
more than two elements.
For these sets S{\l}upecki's clone is an example
of a clone that belongs 
to the filter $\FF_A$ (see \cite{leht}),
but is not a discriminator clone.
Therefore in this case the clone $\cl{D}$
generated by the discriminator function
is not the least element of $\FF_A$.
However, we can use Theorem~\ref{boolean-case}
to show that $\cl{D}$ is a minimal member of $\FF_A$.
This will also imply that for finite sets with
more than two elements the filter $\FF_A$ 
is not principal.

\begin{theorem}
\label{minimality}
Let $A$ be a finite set of cardinality $|A|>2$, 
and let $\cl{D}$ be the clone
generated by the discriminator function on $A$.
If $\cl{H}$ is a proper subclone of $\cl{D}$,
then $\fequiv[\cl{H}]$ has infinite index in $\cl{O}_A$.
\end{theorem}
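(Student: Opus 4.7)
The plan is to reduce to the two-element case via Proposition~\ref{basic_props2}~(iii). First, observe that since all subsets of $A$ are subalgebras of $\al{A}=(A;\cl{D})$ (as noted in step~(1) of the proof of Theorem~\ref{main-thm}), every operation in $\cl{H}\subseteq\cl{D}$ preserves every subset $B$ of $A$. I will pick any two-element $B\subseteq A$, which exists because $|A|\ge 3$, and prove that $\cl{H}|_B$ is not a discriminator clone on $B$, that is, $t|_B\notin\cl{H}|_B$. Theorem~\ref{boolean-case} will then yield that $\fequiv[\cl{H}|_B]$ has infinite index in $\cl{O}_B$, and Proposition~\ref{basic_props2}~(iii) (applied contrapositively) will transport this to the conclusion that $\fequiv[\cl{H}]$ has infinite index in $\cl{O}_A$.

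To establish $t|_B\notin\cl{H}|_B$ it suffices to show that no ternary $f\in\cl{H}$ satisfies $f|_{B^3}=t|_{B^3}$. The first step will be to enumerate all ternary $f\in\cl{D}$ with this property. By the orbit analysis in the proof of Theorem~\ref{main-thm}, an operation in $\cl{D}$ is determined on each $\sim_{\cl{D}}$-orbit by the choice of a coordinate of a representative tuple. The $\sim_{\cl{D}}$-orbits in $A^3$ have breadth $1$, $2$, or $3$. Because $|B|=2$, every orbit of breadth $\le 2$ meets $B^3$ (for instance, the orbit of $(1,1,2)$ contains $(a,a,b)$ for any two distinct elements $a,b\in B$), while the unique breadth-$3$ orbit is disjoint from $B^3$. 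Consequently, $f|_{B^3}=t|_{B^3}$ forces $f=t$ on every orbit of breadth $\le 2$, and the only remaining freedom is the value $f(x,y,z)\in\{x,y,z\}$ on the breadth-$3$ orbit. This yields exactly three candidates $f_1,f_2,f_3\in\cl{D}$, where $f_i(x,y,z)$ equals the $i$-th coordinate on distinct triples and $f_1=t$.

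The heart of the proof is then to show that each of $f_2$ and $f_3$ already generates $t$, so that $f_2\in\cl{H}$ or $f_3\in\cl{H}$ would force $t\in\cl{H}$ and contradict $\cl{H}\subsetneq\cl{D}$. I will establish this by the explicit identities
\[
t(x,y,z)=f_2\bigl(x,y,f_2(x,y,z)\bigr) \quad\text{and}\quad t(x,y,z)=f_3\bigl(f_3(x,y,z),y,x\bigr),
\]
each verified by a short case analysis on the breadth of $(x,y,z)$. The mechanism in the breadth-$3$ case is that the inner $f_i$ produces a tuple of breadth $\le 2$ in the outer position, where $f_i$ has already been pinned down to coincide with $t$. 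Combined with the trivial case $f_1=t\notin\cl{H}$, this proves that $\{f_1,f_2,f_3\}\cap\cl{H}=\emptyset$, hence $t|_B\notin\cl{H}|_B$, as required.

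The only step I expect to require genuine thought is guessing the two composition identities above; once written down, verification is mechanical, and everything else is bookkeeping with the orbit description of $\cl{D}$ already developed in Theorem~\ref{main-thm}.
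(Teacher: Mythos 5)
Your proof is correct, and it diverges from the paper's at the decisive step. Both arguments share the same reduction skeleton: restrict to a two-element subset $B$ (legitimate, since every operation in $\cl{H}\subseteq\cl{D}$ preserves every subset of $A$), show the restricted clone is not a discriminator clone on $B$, and then invoke Theorem~\ref{boolean-case} together with the contrapositive of Proposition~\ref{basic_props2}~(iii). The paper, however, gets ``$\cl{H}|_B$ is not a discriminator clone'' by first reducing to maximal subclones of $\cl{D}$ and then quoting Marchenkov's classification: $\cl{D}$ has exactly two maximal subclones $\cl{E}$ and $\cl{K}$, whose restrictions to $B$ land inside the linear clone $\cl{L}$ and the monotone clone $\cl{M}$ respectively, two of the four cases already handled in Theorem~\ref{boolean-case}. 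You instead prove directly, for an arbitrary proper subclone $\cl{H}$, that $t|_B\notin\cl{H}|_B$: using the orbit description of $\cl{D}^{(3)}$ from Lemma~\ref{lm-h} (each $f\in\cl{D}^{(3)}$ picks a coordinate on each $\sim_\cl{D}$-block, and the blocks of breadth $\le 2$ all meet $B^3$ while the breadth-$3$ block does not), you correctly identify $f_1=t,f_2,f_3$ as the only candidates, and your two identities $t(x,y,z)=f_2\bigl(x,y,f_2(x,y,z)\bigr)$ and $t(x,y,z)=f_3\bigl(f_3(x,y,z),y,x\bigr)$ do check out in all cases (the inner application collapses a breadth-$3$ triple to breadth $\le 2$, where $f_i$ already agrees with $t$), so each candidate generates $\cl{D}$ and none can lie in $\cl{H}$. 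What each approach buys: the paper's is shorter on the page but rests on a nontrivial external result (Marchenkov's description of the maximal subclones of the homogeneous clone), whereas yours is self-contained and elementary, at the price of an explicit enumeration and verification; in effect you re-derive exactly the fragment of Marchenkov's theorem that is needed, namely that any ternary member of $\cl{D}$ agreeing with the discriminator on a two-element subalgebra already generates $\cl{D}$.
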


\begin{proof}
It follows from Lemma~\ref{basic_props2}~(ii) that
the clones $\cl{C}$ for which $\fequiv[\cl{C}]$
has infinite index in $\cl{O}_A$ form an order ideal
in the lattice of all clones on $A$.
Therefore it suffices to prove the statement 
when $\cl{H}$ is a
maximal (proper) subclone of $\cl{D}$.

We may assume without loss of generality that
$B=\{0,1\}$ is a subset of $A$.
The operations in $\cl{D}$ preserve all subsets of
$A$, including $B$.
Therefore every operation $f \in\cl{D}$ 
can be restricted to $B$ to yield 
an operation $f|_B$ on $B$.
By a result of Marchenkov~\cite{Marchenkov} 
(see also \cite{Szendrei})
$\cl{D}$ has two maximal subclones:
\begin{itemize}
\item
the clone $\cl{E}$ consisting of all $f\in\cl{D}$
such that $f|_B$ is a linear operation on $B$, and
\item
the clone $\cl{K}$ consisting of all $f\in\cl{D}$
such that $f|_B$ is monotone with respect to
the order $0\le 1$ on $B$.
\end{itemize}
Thus $\cl{E}|_B$ is a subclone of the clone 
$\cl{L}$ of
linear operations on $B$, while
$\cl{K}|_B$ is 
a subclone of the clone $\cl{M}$ of
monotone operations on $B$.
Hence, by Theorem~\ref{boolean-case},
each one of the equivalence relations
$\fequiv[\cl{E}|_B]$ and $\fequiv[\cl{K}|_B]$
has infinite index in $\cl{O}_B$.
Therefore by Lemma~\ref{basic_props2}~(iii)
each one of $\fequiv[\cl{K}]$ and $\fequiv[\cl{E}]$
has infinite index in $\cl{O}_A$.
\end{proof}

\section{$\cl{C}$-equivalence for
discriminator clones $\cl{C}$ of Boolean functions}

Boolean functions are operations on the set $A=\{0,1\}$.
In this section we will explicitly describe the
$\cl{C}$-equivalence relation for Boolean functions
provided $\cl{C}$ is a discriminator clone.
We will also determine, for each such clone $\cl{C}$, 
the partial order $\preceq_\cl{C}$ induced on 
the set of $\cl{C}$-equivalence
classes by the quasiorder $\subf[\cl{C}]$.

To describe Boolean functions we will use the Boolean
algebra operations $\vee$, $\cdot$, 
and $\bar{\phantom{t}}$, 
as well as the Boolean ring operations $+$ and $\cdot$
on $A=\{0,1\}$. 
The unary constant operations will be denoted by
$0$ and $1$.
If $\vect{a}=(a_1,\ldots,a_n)$ is an $n$-tuple in $A^n$,
$\overline{\vect{a}}$ will denote the $n$-tuple
$(\bar a_1,\ldots,\bar a_n)$.
The tuples $(0,\ldots,0)$ and $(1,\ldots,1)$ will be
denoted by $\vect{0}$ and $\vect{1}$, respectively.

It is easy to see from Post's lattice (see \cite{Post}) 
or from the 
characterization of (locally closed) 
discriminator clones cited 
in Claim~\ref{bp} that 
there are six discriminator clones of Boolean
functions, namely
\begin{itemize}
\item
the clone $\cl{O}=\cl{O}_{A}$ 
of all Boolean functions;
\item
for each $i=0,1$, the clone $\cl{T}_i$ of all 
Boolean functions that fix $i$, that is,
\[
\cl{T}_0=\{f\in \cl{O}: f(\vect{0})=0\}
\qquad\text{and}\qquad
\cl{T}_1=\{f\in \cl{O}: f(\vect{1})=1\};
\]
\item
the clone $\cl{T}_\id=\cl{T}_0\cap\cl{T}_1$ of all
idempotent Boolean functions;
\item
the clone $\cl{S}$ of all self-dual Boolean functions, 
that is,
\[
\cl{S}=\bigl\{f\in \cl{O}: 
f(\overline{\vect{x}})=
\overline{f(\vect{x})}
\text{ for all $\vect{x}$}\bigr\};
\]
\item
the clone $\cl{D}=\cl{T}_\id\cap\cl{S}$ of all
idempotent self-dual Boolean functions,
\end{itemize}
and they are ordered by inclusion as shown in
Figure~\ref{fig-clones}.
\begin{figure} 
\begin{center}
\setlength{\unitlength}{0.5mm}
\begin{picture}(60,65)
\put(30,0){\circle*{4}}
\put(30,20){\circle*{4}}
\put(0,40){\circle*{4}}
\put(40,40){\circle*{4}}
\put(60,40){\circle*{4}}
\put(30,60){\circle*{4}}
\thicklines
\put(30,0){\line(-3,4){30}}
\put(30,0){\line(0,1){20}}
\put(30,20){\line(1,2){10}}
\put(30,20){\line(3,2){30}}
\put(30,60){\line(-3,-2){30}}
\put(30,60){\line(1,-2){10}}
\put(30,60){\line(3,-2){30}}
\put(34,59){$\cl{O}$}
\put(64,39){$\cl{T}_1$}
\put(29,39){$\cl{T}_0$}
\put(34,16){$\cl{T}_\id$}
\put(-10,39){$\cl{S}$}
\put(34,-1){$\cl{D}$}
\end{picture}
\end{center}
\caption{Discriminator clones of Boolean functions}
\label{fig-clones}
\end{figure}
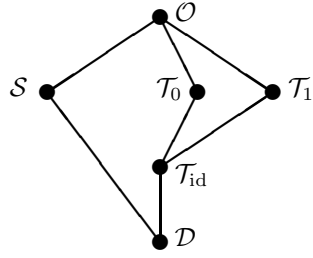

Our main tool in understanding $\cl{C}$-equivalence for
these clones $\cl{C}$ will be Theorem~\ref{le_C}.
To be able to apply the theorem
we will need to know the $\sim_\cl{C}$-blocks
in $A^n$ for each $n\ge1$, and the subalgebras $\al{S}^\cl{C}_\vect{a}$ of $(A;\cl{C})$ for all
$\vect{a}\in A^n$.
The descriptions of the six discriminator
clones above yield that
for each $\vect{a}\in A^n$ ($n\ge1$),
\begin{equation}
\label{orb}
\vect{a}/{\sim_\cl{C}}=
\begin{cases}
\{\vect{a},\overline{\vect{a}}\} &
\text{if $\cl{C}\subseteq\cl{S}$,}\\
\{\vect{a}\} &
\text{otherwise;}
\end{cases}
\end{equation}
and
\begin{equation}
\label{subal}
\al{S}^\cl{C}_\vect{a}=
\begin{cases}
\{0\} &
\text{if $\vect{a}=\vect{0}$ and
            $\cl{C}\subseteq\cl{T}_0$,}\\
\{1\} &
\text{if $\vect{a}=\vect{1}$ and
            $\cl{C}\subseteq\cl{T}_1$,}\\
\{0,1\} &
\text{otherwise.}
\end{cases}
\end{equation}
(\ref{orb}) implies that each $\sim_\cl{C}$-block
has the same size, which we will denote by $d_\cl{C}$;
namely, 
\[
d_\cl{C}=
\begin{cases}
2 &
\text{if $\cl{C}\subseteq\cl{S}$,}\\
1 &
\text{otherwise.}
\end{cases}
\]
Furthermore, 
\[
T^\cl{C}_n=
\begin{cases}
     \{\vect{c}=(c_1,\ldots,c_n)\in A^n: c_1=0\} &
\text{if $\cl{C}\subseteq\cl{S}$,}\\
A^n &
\text{otherwise}
\end{cases}
\]
is a transversal for the $\sim_\cl{C}$-blocks in $A^n$.

For arbitrary Boolean function $f$ let 
$\Im^{[2]}(f)$ denote the collection of all
sets of the form 
$\{f(\vect{a}),f(\overline{\vect{a}})\}$ as $\vect{a}$
runs over all elements of the domain of $f$, and let
$\Im^{[1]}(f)$ denote the collection of all singletons
$\{f(\vect{a})\}$ as $\vect{a}$
runs over all elements of the domain of $f$.
Thus $\Im^{[d_\cl{C}]}(f)$ consists of the ranges
of all functions $f|_P$ as $P$ runs over
all $\sim_\cl{C}$-blocks in the domain of $f$.

\begin{theorem}
\label{boolean_le}
Let $\cl{C}$ be a discriminator clone of Boolean
functions.
The following conditions on
$f\in\cl{O}^{(n)}$ and $g\in\cl{O}^{(m)}$
are equivalent:
\begin{enumerate}
\item[{\rm(a)}]
$f\subf[\cl{C}]g$;
\item[{\rm(b)}]
$f(\vect{0})=g(\vect{0})$ 
if $\cl{C}\subseteq\cl{T}_0$,  
$f(\vect{1})=g(\vect{1})$ 
if $\cl{C}\subseteq\cl{T}_1$, and
$\Im^{[d_\cl{C}]}(f)\subseteq \Im^{[d_\cl{C}]}(g)$.
\end{enumerate}
If $\cl{C}=\cl{T}_\id$, 
$\cl{T}_0$, $\cl{T}_1$,
or $\cl{O}$, then the inclusion 
$\Im^{[d_\cl{C}]}(f)\subseteq \Im^{[d_\cl{C}]}(g)$
in condition (b) can be replaced by 
$\Im(f)\subseteq \Im(g)$.
\end{theorem}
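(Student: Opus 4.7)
The approach is to apply Theorem~\ref{le_C} to each of the six discriminator clones, using the explicit descriptions (\ref{orb}) of the $\sim_\cl{C}$-orbits, (\ref{subal}) of the subalgebras $\al{S}^\cl{C}_\vect{a}$, and the transversal $T^\cl{C}_n$, together with the fact that $\Phi^\cl{C}_{\vect{d},\vect{c}}$ is a bijection of blocks sending $\vect{c} \mapsto \vect{d}$ and (when $\cl{C} \subseteq \cl{S}$) necessarily sending $\overline{\vect{c}} \mapsto \overline{\vect{d}}$ by preservation of the negation automorphism. The ``moreover'' clause will fall out from the trivial observation that when $\cl{C} \not\subseteq \cl{S}$ one has $d_\cl{C} = 1$, whence $\Im^{[1]}(f) \subseteq \Im^{[1]}(g)$ unpacks via $\{x\} \mapsto x$ to $\Im(f) \subseteq \Im(g)$.

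For (a) $\Rightarrow$ (b), Theorem~\ref{le_C} supplies, for every $\vect{c} \in T^\cl{C}_n$, a tuple $\vect{d} \in A^m$ with $\al{S}^\cl{C}_\vect{d} \le \al{S}^\cl{C}_\vect{c}$ and $f|_P = g|_Q \circ \Phi^\cl{C}_{\vect{d},\vect{c}}$. Since $\Phi$ is a bijection $P \to Q$, one has $\Im(f|_P) = \Im(g|_Q)$; these are precisely the elements of $\Im^{[d_\cl{C}]}(f)$ and $\Im^{[d_\cl{C}]}(g)$ contributed by $P$ and $Q$, so as $\vect{c}$ ranges over $T^\cl{C}_n$ one obtains $\Im^{[d_\cl{C}]}(f) \subseteq \Im^{[d_\cl{C}]}(g)$. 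When $\cl{C} \subseteq \cl{T}_0$, specialize to $\vect{c} = \vect{0}$: (\ref{subal}) forces $\al{S}^\cl{C}_\vect{d} \le \{0\}$, hence $\vect{d} = \vect{0}$; then $\Phi(\vect{0}) = \vect{0}$ and the identity $f|_P = g|_Q \circ \Phi$ evaluated at $\vect{0}$ gives $f(\vect{0}) = g(\vect{0})$. When additionally $\cl{C} \subseteq \cl{S}$, i.e., $\cl{C} = \cl{D}$, preservation of negation also forces $\Phi(\vect{1}) = \vect{1}$, yielding $f(\vect{1}) = g(\vect{1})$ --- consistent with $\cl{D} \subseteq \cl{T}_1$. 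The $\cl{C} \subseteq \cl{T}_1$ case is symmetric.

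For (b) $\Rightarrow$ (a), one constructs a $\vect{d}$ for each $\vect{c} \in T^\cl{C}_n$ meeting the hypotheses of Theorem~\ref{le_C}(b). If $\vect{c} = \vect{0}$ and $\cl{C} \subseteq \cl{T}_0$, take $\vect{d} = \vect{0}$: the subalgebra constraint holds trivially and the equation $f|_P = g|_Q \circ \Phi$ reduces to $f(\vect{0}) = g(\vect{0})$ --- together with $f(\vect{1}) = g(\vect{1})$ in the case $\cl{C} = \cl{D}$ --- both supplied by (b). The case $\vect{c} = \vect{1}$ with $\cl{C} \subseteq \cl{T}_1$ is symmetric. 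In every other situation (\ref{subal}) yields $\al{S}^\cl{C}_\vect{c} = \{0,1\}$, imposing no constraint on $\vect{d}$, and the inclusion $\Im^{[d_\cl{C}]}(f) \subseteq \Im^{[d_\cl{C}]}(g)$ produces $\vect{b} \in A^m$ with $\{g(\vect{b}), g(\overline{\vect{b}})\} = \{f(\vect{c}), f(\overline{\vect{c}})\}$ (or simply $g(\vect{b}) = f(\vect{c})$ when $d_\cl{C} = 1$). The main obstacle is that when $\cl{C} \subseteq \cl{S}$ the map $\Phi$ demands an \emph{ordered} match rather than a set equality; this is resolved by noting that $\vect{b}$ and $\overline{\vect{b}}$ represent the same $\sim_\cl{C}$-block, so one can set $\vect{d} := \vect{b}$ or $\vect{d} := \overline{\vect{b}}$ to ensure $g(\vect{d}) = f(\vect{c})$, whence automatically $g(\overline{\vect{d}}) = f(\overline{\vect{c}})$.
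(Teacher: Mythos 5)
Your proposal is correct and follows essentially the same route as the paper: both specialize Theorem~\ref{le_C} using the explicit data in (\ref{orb}) and (\ref{subal}), split into the constrained blocks $\vect{0}/{\sim_\cl{C}}$, $\vect{1}/{\sim_\cl{C}}$ versus the unconstrained ones, and resolve the ordered-versus-unordered matching for $\cl{C}\subseteq\cl{S}$ by replacing $\vect{d}$ with $\overline{\vect{d}}$ when necessary (the paper phrases this as every bijection $P\to Q$ being some $\Phi^\cl{C}_{\vect{d},\vect{c}}$). The derivation of the final claim from $d_\cl{C}=1$ also matches the paper.
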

 
\begin{proof}
First we will prove the equivalence of conditions
(a) and (b).
By Theorem~\ref{le_C}, $f\subf[\cl{C}]g$ if and only if
for all $P=\vect{c}/{\sim_\cl C}$ with 
$\vect{c}\in T^\cl{C}_n$,
\begin{equation}
\label{set}
f|_P\in
\{g|_Q\circ\Phi^\cl{C}_{\vect{d},\vect{c}}:
Q=\vect{d}/{\sim_\cl{C}},\ 
\vect{d}\in A^m,\ 
\al{S}^\cl{C}_\vect{d}\le \al{S}^\cl{C}_\vect{c}
\}.
\end{equation}
The functions $\Phi^\cl{C}_{\vect{d},\vect{c}}
\colon P\to Q$
here are bijections, since they are surjective
by definition, and $|P|=|Q|=d_\cl{C}$. 
If $\al{S}^\cl{C}_\vect{c}=\{0\}$, then
$\vect{c}=\vect{0}$ and $\cl{C}\subseteq\cl{T}_0$
by (\ref{subal}).
Thus
$\al{S}^\cl{C}_\vect{d}\le \al{S}^\cl{C}_\vect{c}$
forces $\vect{d}=\vect{0}$.
Similarly, 
if $\al{S}^\cl{C}_\vect{c}=\{1\}$, then
$\vect{c}=\vect{1}$, $\cl{C}\subseteq\cl{T}_1$, and
$\vect{d}=\vect{1}$.
In all other cases
$\al{S}^\cl{C}_\vect{c}=\{0,1\}$, therefore
all $\vect{d}\in A^m$ satisfy 
$\al{S}^\cl{C}_\vect{d}\le \al{S}^\cl{C}_\vect{c}$. 
Since $|P|=d_\cl{C}=1$ or $2$, it follows that
in this case each bijection of
$P$ onto another $\sim_\cl{C}$-block $Q$ 
is of the form
$\Phi^\cl{C}_{\vect{d},\vect{c}}$ for an appropriate
$\vect{d}\in Q$.
Consequently, (\ref{set}) is equivalent to the following condition:
\begin{enumerate}
\item[(1)]
$f|_P =g|_Q\circ\phi$
for the unique bijection $\phi\colon P\to Q$
with $\phi(\vect{0})=\vect{0}$, 
if $P=\vect{0}/{\sim_\cl{C}}$ 
and $\cl{C}\subseteq\cl{T}_0$;
\item[(2)]
$f|_P =g|_Q\circ\phi$
for the unique bijection $\phi\colon P\to Q$
with $\phi(\vect{1})=\vect{1}$, 
if $P=\vect{1}/{\sim_\cl{C}}$
and $\cl{C}\subseteq\cl{T}_1$;
\item[(3)]
$f|_P \in\{g|_Q\circ\phi: 
\text{$\phi$ is a bijection $P\to Q$},\ 
Q=\vect{d}/{\sim_\cl{C}},\ 
\vect{d}\in A^m\}$ otherwise.
\end{enumerate}
(1) and (2) require that
\begin{enumerate}
\item[(i)]
$f(\vect{0})=g(\vect{0})$ holds if 
$\cl{C}\subseteq\cl{T}_0$
and $d_\cl{C}=1$
(that is, if 
$\cl{D}\not=\cl{C}\subseteq\cl{T}_0$),
\item[(ii)]
$f(\vect{1})=g(\vect{1})$ holds if 
$\cl{C}\subseteq\cl{T}_1$
and $d_\cl{C}=1$
(that is, if 
$\cl{D}\not=\cl{C}\subseteq\cl{T}_1$), and
\item[(iii)]
both of
$f(\vect{0})=g(\vect{0})$ and 
$f(\vect{1})=g(\vect{1})$ hold if
$\cl{C}\subseteq\cl{T}_i$ for $i=0$ or $1$ 
and 
$d_\cl{C}=2$
(that is, if 
$\cl{C}=\cl{D}\,(\subseteq \cl{T}_0\cap\cl{T}_1)$).
\end{enumerate}
In (3) the set
$\{g|_Q\circ\phi: 
\text{$\phi$ is a bijection $P\to Q$},\ 
Q=\vect{d}/{\sim_\cl{C}},\ 
\vect{d}\in A^m\}$
is equal to the set of functions
$P\to A$ whose range is in $\Im^{[d_\cl{C}]}(g)$. 
Therefore condition (3)
can be rephrased as follows:
\begin{enumerate}
\item[(iv)]
for all $P=\vect{c}/{\sim_\cl{C}}$ 
($\vect{c}\in T^\cl{C}_n$) not covered by 
(i)--(iii) 
$f|_P$ is a function $P\to A$ whose range is in 
$\Im^{[d_\cl{C}]}(g)$.
\end{enumerate}
It is easy to see now that (i)--(iv) hold for all $f|_P$
($P=\vect{c}/{\sim_\cl{C}}$, $\vect{c}\in T^\cl{C}_n$)
if and only if $f$ and $g$ satisfy condition (b).
This completes the proof of the equivalence of
conditions (a) and (b).

If $\cl{C}$ is one of the clones $\cl{T}_\id$,
$\cl{T}_0$, $\cl{T}_1$,
or $\cl{O}$, then $d_\cl{C}=1$.
Hence for each Boolean function $f\in\cl{O}$, 
$\Im^{[d_\cl{C}]}(f)$ is the set of singletons
$\{r\}$ with $r\in\Im(f)$.
Therefore for arbitrary $f,g\in\cl{O}$ we have
$\Im^{[d_\cl{C}]}(f)\subseteq \Im^{[d_\cl{C}]}(g)$
if and only if
$\Im(f)\subseteq \Im(g)$,
proving the last statement of the theorem.
\end{proof}

For each discriminator clone $\cl{C}$
of Boolean functions
Theorem~\ref{boolean_le}
allows us to describe 
the $\cl{C}$-equivalence classes
of Boolean functions
and also the partial order $\preceq_\cl{C}$ induced
by $\subf[\cl{C}]$ on the set $O/{\fequiv[\cl{C}]}$
of $\cl{C}$-equivalence classes.

We will use the following notation:
$N$ will denote the set of all nonconstant 
functions in $\cl{O}$, and $[i]$ ($i=0,1$) 
the set of all constant functions with value $i$.
For a nonempty
subset $R$ of $\bigl\{\{0\},\{1\},\{0,1\}\bigr\}$, 
$F_R$ will denote the set of all functions
$f\in\cl{O}$ such that $\Im^{[2]}(f)=R$.
Furthermore, for any ordered pair $(a,b)\in\{0,1\}^2$
and for any set $U$ of Boolean functions, $U^{ab}$
will denote the set of all functions $f\in U$
such that $f(\vect0)=a$ and $f(\vect1)=b$.
Notice that with this notation 
$[i]=F_{\{i\}}=F_{\{i\}}^{ii}$
($i=0,1$).

\def\oneclass#1{%
\begin{picture}(28,14)
\thicklines
\put(15,7.5){\oval(28,14)}
\put(1,5.5){\hbox to 14truemm{\hfil$#1$\hfil}}
\end{picture}
}

\def\oneclassbottom#1#2{%
\begin{picture}(28,14)
\thicklines
\put(15,7.5){\oval(28,14)}
\put(1,5.5){\hbox to 14truemm{\hfil$#1$\hfil}}
\put(1,-7){\hbox to 14truemm{\hfil$#2$\hfil}}
\end{picture}
}

\def\oneclasstop#1#2{%
\begin{picture}(28,14)
\thicklines
\put(15,7.5){\oval(28,14)}
\put(1,5.5){\hbox to 14truemm{\hfil$#1$\hfil}}
\put(1,18){\hbox to 14truemm{\hfil$#2$\hfil}}
\end{picture}
}

\def\onediamond#1#2#3#4#5#6#7#8{%
\begin{picture}(56,84)
\put(14,0){\oneclassbottom{#1}{#5}}
\put(28,28){\oneclass{#2}}
\put(0,42){\oneclass{#3}}
\put(14,70){\oneclasstop{#4}{#8}}
\put(32.5,14.5){\line(1,2){6.9}}
\put(17.5,56.5){\line(1,2){6.9}}
\put(25.7,14.4){\line(-1,3){9.3}}
\put(40.7,42.4){\line(-1,3){9.3}}
\put(42,21){$#6$}
\put(4,60){$#7$}
\end{picture}
}

\begin{figure} 
\begin{center}
\setlength{\unitlength}{0.5mm}
\begin{picture}(250,105)
\thicklines
\put(0,7){\onediamond{0}{x\bar{y}}{x+y}{xy+z}%
{[0]}{F_{0,01}^{00}}{F_{0,1}^{00}}{F_{0,1,01}^{00}}}
\put(62,7){\onediamond{x}{xy}{x\vee y}{x\bar{y}+z}%
{F_{01}^{01}}{F_{0,01}^{01}}%
{F_{1,01}^{01}}{F_{0,1,01}^{01}}}
\put(124,7){\onediamond{\bar{x}}{\bar{x}\bar{y}}%
{\bar{x}\vee\bar{y}}{x\bar{y}+\bar{z}}%
{F_{01}^{10}}{F_{0,01}^{10}}%
{F_{1,01}^{10}}{F_{0,1,01}^{10}}}
\put(186,7){\onediamond{1}{x+\bar{y}}%
{\bar{x}\vee y}{xy+\bar{z}}%
{[1]}{F_{0,1}^{11}}{F_{1,01}^{11}}{F_{0,1,01}^{11}}}
\end{picture}
\end{center}
\caption{The poset 
$(\cl{O}/{\fequiv[\cl{D}]};\preceq_\cl{D})$}
\label{fig-D}
\end{figure}

It follows from Theorem~\ref{boolean_le} 
that $f\fequiv[\cl{D}]g$ if and only if 
$f(\vect{0})=g(\vect{0})$,   
$f(\vect{1})=g(\vect{1})$, and 
$\Im^{[2]}(f)=\Im^{[2]}(g)$.
Therefore the $\cl{D}$-equivalence
classes are the nonempty sets of the form 
$F_R^{ab}$ where 
$\emptyset\not=R\subseteq\bigl\{\{0\},\{1\},\{0,1\}\bigr\}$
and $(a,b)\in\{0,1\}^2$. 
If $F_R^{ab}\not=\emptyset$, then $\{a,b\}\in R$,
because 
$f\in F_R^{ab}$ implies that $R=\Im^{[2]}(f)$ and
$\{a,b\}=\{f(\vect{0}),f(\vect{1})\}\in\Im^{[2]}(f)$.
Thus the $\cl{D}$-equivalence classes are
the nonempty sets among the $16$ sets $F_R^{ab}$ with
$\{a,b\}\in R\subseteq\bigl\{\{0\},\{1\},\{0,1\}\bigr\}$.
Figure~\ref{fig-D} shows these $16$ 
sets along with representatives for each of 
them, proving that none of them are empty.
Hence there are 16 $\cl{D}$-equivalence classes,
and according to Theorem~\ref{boolean_le}, 
the ordering $\preceq_\cl{D}$ between them is
as depicted in Figure~\ref{fig-D}.
For notational simplicity, in Figure~\ref{fig-D}
we omit braces when we list the elements of $R$ in 
$F_R^{ab}$.
For example, we write $F_{0,01}^{10}$ instead of $F_{\{\{0\},\{0,1\}\}}^{10}$.

\begin{figure} 
\begin{center}
\setlength{\unitlength}{0.5mm}
\begin{picture}(148,105)
\thicklines
\put(0,7){\oneclassbottom{0}{[0]}}
\put(60,7){\oneclassbottom{x}{F_{01}}}
\put(120,7){\oneclassbottom{1}{[1]}}
\put(0,43){\oneclass{xy}}
\put(60,43){\oneclass{x+y}}
\put(120,43){\oneclass{x\vee y}}
\put(60,79){\oneclasstop{xy+z}{F_{0,1,01}}}
\put(75,57.5){\line(0,1){22}}
\put(15,21.5){\line(0,1){22}}
\put(135,21.5){\line(0,1){22}}
\put(26,21.1){\line(5,3){38}}
\put(26,57.1){\line(5,3){38}}
\put(86,21.1){\line(5,3){38}}
\put(64,21.1){\line(-5,3){38}}
\put(124,21.1){\line(-5,3){38}}
\put(124,57.1){\line(-5,3){38}}
\put(-15,54){$F_{0,01}$}
\put(90,54){$F_{0,1}$}
\put(150,54){$F_{1,01}$}
\end{picture}
\end{center}
\caption{The poset 
$(\cl{O}/{\fequiv[\cl{S}]};\preceq_\cl{S})$}
\label{fig-S}
\end{figure}

For the clone $\cl{S}$
Theorem~\ref{boolean_le} yields 
that $f\fequiv[\cl{S}]g$ if and only if 
$\Im^{[2]}(f)=\Im^{[2]}(g)$.
Thus the $\cl{S}$-equivalence classes are
the nonempty sets among the $7$ sets $F_R$ with
$\emptyset\not=
R\subseteq\bigl\{\{0\},\{1\},\{0,1\}\bigr\}$.
Figure~\ref{fig-S} shows these  
sets together with representatives for each of 
them, hence none of them are empty.
Thus there are $7$ $\cl{S}$-equivalence classes,
and according to Theorem~\ref{boolean_le}, 
the ordering $\preceq_\cl{S}$ between them is
as indicated in Figure~\ref{fig-S}.

\begin{figure} 
\begin{center}
\setlength{\unitlength}{0.5mm}
\begin{picture}(154,65)
\thicklines
\put(0,7){\oneclassbottom{0}{[0]}}
\put(126,7){\oneclassbottom{1}{[1]}}
\put(0,42){\oneclasstop{x+y}{N^{00}}}
\put(42,42){\oneclasstop{x}{N^{01}}}
\put(84,42){\oneclasstop{\bar x}{N^{10}}}
\put(126,42){\oneclasstop{x+\bar y}{N^{11}}}
\put(15,21.5){\line(0,1){21}}
\put(141,21.5){\line(0,1){21}}
\end{picture}
\end{center}
\caption{The poset 
$(\cl{O}/{\fequiv[\cl{T}_\id]};\preceq_{\cl{T}_\id})$}
\label{fig-Tid}
\end{figure}

Proceeding similarly, for the clone $\cl{T}_\id$ 
we get from Theorem~\ref{boolean_le} 
that $f\fequiv[\cl{T}_\id]g$ if and only if 
$f(\vect{0})=g(\vect{0})$,   
$f(\vect{1})=g(\vect{1})$, and 
$\Im(f)=\Im(g)$.
Since the range of each nonconstant Boolean
function is $\{0,1\}$, we conclude that the 
$\cl{T}_\id$-equivalence classes are
$[0]$, $[1]$, and $N^{ab}$ with $(a,b)\in\{0,1\}^2$.
Figure~\ref{fig-Tid} shows representatives of
these classes and the ordering $\preceq_{\cl{T}_\id}$
among them according to Theorem~\ref{boolean_le}.

\begin{figure} 
\begin{center}
\setlength{\unitlength}{0.5mm}
\begin{picture}(70,65)
\thicklines
\put(0,7){\oneclassbottom{0}{[0]}}
\put(42,7){\oneclassbottom{1}{[1]}}
\put(0,42){\oneclasstop{x}{N^{0*}}}
\put(42,42){\oneclasstop{\bar x}{N^{1*}}}
\put(15,21.5){\line(0,1){21}}
\put(57,21.5){\line(0,1){21}}
\end{picture}
\end{center}
\caption{The poset 
$(\cl{O}/{\fequiv[\cl{T}_0]};\preceq_{\cl{T}_0})$}
\label{fig-T0}
\end{figure}

Analogously, Theorem~\ref{boolean_le} yields that
the $\cl{T}_0$-equivalence classes 
are $[i]$ and  $N^{i*}=N^{i0}\cup N^{i1}$ ($i=0,1$)
with representatives and
ordering as shown in Figure~\ref{fig-T0}.
The results for $\cl{T}_1$ are similar.

\begin{figure} 
\begin{center}
\setlength{\unitlength}{0.5mm}
\begin{picture}(70,65)
\thicklines
\put(0,7){\oneclassbottom{0}{[0]}}
\put(42,7){\oneclassbottom{1}{[1]}}
\put(21,42){\oneclasstop{x}{N}}
\put(19.2,21.5){\line(3,5){12.5}}
\put(52.8,21.5){\line(-3,5){12.5}}
\end{picture}
\end{center}
\caption{The poset 
$(\cl{O}/{\fequiv[\cl{O}]};\preceq_{\cl{O}})$}
\label{fig-O}
\end{figure}

Finally, we obtain either 
from Theorem~\ref{boolean_le} or from
the special case $|A|=2$ of Corollary~\ref{cor-O} that
the $\cl{O}$-equivalence classes 
are $[i]$ ($i=0,1$) and $N$
with representatives and
ordering as shown in Figure~\ref{fig-O}.

To conclude our discussion of the posets 
$(\cl{O}/{\fequiv[\cl{C}]};\preceq_{\cl{C}})$,
recall from Corollary~\ref{cor-nu} that if
$\cl{C}\subseteq\cl{C}'$, then we have a
surjective, order preserving mapping 
$\nu_{\cl{C}',\cl{C}}$
from the poset 
$(\cl{O}_A/{\fequiv[\cl{C}]};\preceq_\cl{C})$
onto 
$(\cl{O}_A/{\fequiv[\cl{C}']};\preceq_{\cl{C}'})$,
which assigns to each $\cl{C}$-equivalence class
the $\cl{C}'$-equivalence class containing it.
By (\ref{comp-nu}) it suffices to look at the mappings
$\nu_{\cl{C}',\cl{C}}$ for covering pairs 
$\cl{C}\subset\cl{C}'$.

For each covering pair $\cl{C}\subset\cl{C}'$
of discriminator clones (see Figure~\ref{fig-clones}),
one can read off of Figures~\ref{fig-D}--\ref{fig-O}
what the corresponding natural mapping 
$\nu_{\cl{C}',\cl{C}}$ is.
For example, the mapping
$\nu_{\cl{S},\cl{D}}\colon 
(\cl{O}_A/{\fequiv[\cl{D}]};\preceq_\cl{D})\to
(\cl{O}_A/{\fequiv[\cl{S}]};\preceq_{\cl{S}})$
preserves the heights of elements, and
\begin{itemize}
\item
for elements of height $0$, it sends
$[i]$ to $[i]$ ($i=0,1$), and the other two elements 
$F_{01}^{01}, F_{01}^{10}$ 
in Figure~\ref{fig-D} to the middle
element $F_{01}$ in Figure~\ref{fig-S};
\item
for elements of height $1$, it sends
the leftmost and rightmost elements 
$F_{0,1}^{00}, F_{0,1}^{11}$
in Figure~\ref{fig-D} 
to the middle element $F_{0,1}$ in Figure~\ref{fig-S},
and among the remaining six elements in Figure~\ref{fig-D},
it sends the three that appear lower
to the leftmost element $F_{0,01}$ in Figure~\ref{fig-S},
and the three that appear
higher to the rightmost element $F_{1,01}$ in Figure~\ref{fig-S};
\item
finally, it sends
all four elements of height $2$ in Figure~\ref{fig-D}
to the largest element in Figure~\ref{fig-S}.
\end{itemize}
The natural mapping 
$\nu_{\cl{T}_\id,\cl{D}}\colon 
(\cl{O}_A/{\fequiv[\cl{D}]};\preceq_\cl{D})\to
(\cl{O}_A/{\fequiv[\cl{T}_\id]};\preceq_{\cl{T}_\id})$
preserves the four connected components, and
\begin{itemize}
\item
in the first and last connected components it sends
$[i]$ to $[i]$ ($i=0,1$), and the remaining three
elements in Figure~\ref{fig-D} to 
the height $1$ element $N^{ii}$ ($i=0,1$) of the corresponding component
in Figure~\ref{fig-Tid};
\item
in the second and third connected components
it sends all four elements in Figure~\ref{fig-D} to 
the unique element of the corresponding component in Figure~\ref{fig-Tid}.
\end{itemize}

\section*{Acknowledgements}
This work was initiated while the first author
was visiting the University of Waterloo. 
He is indebted to professors Ian Goulden and 
Bruce Richmond for helpful discussions.

\end{document}